\newcommand{\Z}{\mathbb{Z}}
\newcommand{\RR}{\mathbb{R}}
\newcommand{\CC}{\mathbb{C}}
\newcommand{\bx}{{\bf x}}
\newcommand{\bb}{{\bf b}}
\newcommand{\bt}{{\bf t}}
\newcommand{\ba}{{\bf a}}
\newcommand{\bu}{{\bf u}}
\newcommand{\bv}{{\bf v}}
\newcommand{\bp}{{\bf p}}
\newcommand{\mI}{\mathcal{I}}
\newcommand\cI{\mathcal{I}}
\newcommand\cC{\mathcal{C}}
\newcommand\cM{\mathcal{M}}
\newcommand\cD{\mathcal{D}}
\newcommand\cB{\mathcal{B}}
\DeclareMathOperator{\Res}{Res}
\newcommand{\CM}{\mathrm{CM}}
\newtheorem{thm}{Theorem}      
\newtheorem{cor}[thm]{Corollary}     
\theoremstyle{definition} 
\newtheorem{defin}[thm]{Definition}   
\newtheorem{ex}[thm]{Example}        
\begin{document}
\author{Zvi Rosen}
\author{Jessica Sidman}
\author{Louis Theran}
\address{Florida Atlantic University, Boca Raton, FL 33431}
\email{rosenz@fau.edu}
\address{Mount Holyoke College, South Hadley, MA 01075}
\email{jsidman@mtholyoke.edu}
\address{University of St. Andrews, St. Andrews, Scotland}
\email{lst6@st-and.ac.uk}
                               
\title{Algebraic matroids in action}

\begin{abstract}
In recent years, various notions of algebraic independence have
emerged as a central and unifying theme in a number of areas of applied mathematics, 
including algebraic statistics and the rigidity theory of bar-and-joint frameworks.
In each of these settings the fundamental problem is to determine the
extent to which certain unknowns depend algebraically on given data.
This has, in turn, led to a resurgence of interest in algebraic matroids, 
which are the combinatorial formalism for algebraic (in)dependence.  
We give a self-contained introduction to algebraic matroids together with examples 
highlighting their potential application.
\end{abstract}
\maketitle

\section{Introduction.}
Linear independence is a concept that pervades mathematics and applications, but the
corresponding notion of algebraic independence in its various guises is less
well studied.  As noted in the article of Brylawski and Kelly \cite{brylawskiKelly}, between the 1930 and 1937 editions
of the textbook {\em Moderne Algebra} \cite{vdW}, van der Waerden changed his
treatment of  algebraic independence in a field extension to emphasize how the theory
exactly parallels what is true for linear independence in a vector space, showing the
influence of Whitney's \cite{whitney} introduction of of matroids in the intervening years. 
Though var der Waerden did not use the language of matroids, his observations are the foundation
for the standard definition of an algebraic matroid. In this article, we focus on an
equivalent definition in terms of polynomial ideals that is currently useful in
applied algebraic geometry, providing explicit proofs for results that seem to be
folklore.   We highlight computational aspects that tie the 19th century notion of
elimination via resultants to the axiomatization of independence from the early 20th
century to current applications.

We begin by discussing two examples that will illustrate the scope and 
applicability of the general theory. Our intention is that they are different enough
to illustrate the kinds of connections among disparate areas of active mathematical
inquiry that motivated Rota  \cite{kung} to write in 1986 that ``[i]t is as if one were
to condense all trends of present day mathematics onto a single finite structure, a
feat that anyone would \emph{a priori} deem impossible, were it not for the mere
fact that matroids exist.'' 

Our first example is an instance of the \emph{matrix completion} problem in
statistics, chosen to be small enough that we can work out the mathematics by hand. 
In this scenario, a partially filled matrix $M$ of data is given, and a rank $r$ is
specified.  We seek to understand whether we can fill in (or ``complete'') 
the missing entries so that the resulting matrix has rank $r$.  This is
related to how interdependent entries of a matrix are.

\begin{ex}\label{ex: matrix}
Suppose that we are given four  entries of the following  $2 \times 3$ matrix:
\[ \begin{pmatrix}1 & 2 & *\\ * & 6 & 3 \end{pmatrix}.\]   
In how many ways can we fill in the missing entries, shown as $*$, 
if the matrix is to have rank one?
 
To solve this problem, we let 
$M = \begin{pmatrix} a & b & c\\d& e & f \end{pmatrix}$ be a matrix with
indeterminates as entries. If the matrix $M$ has rank one, then all $2 \times 2$
minors are equal to zero:
\[
\begin{array}{llllll}
(1)&  ae-bd = 0, & (2) &  af-cd =0,& (3) & bf-ce=0.
\end{array}
\]
Since $b$ and $e$ are nonzero we can solve equations (1) and (3) for $c$ and $d$.  
We obtain $c = \frac{bf}{e}=1$ and $d = \frac{ae}{b}=3$. Here, 
Equation (2) is a consequence of the others:
\[
af - cd = af - \frac{bf}{e}\frac{ae}{b}=0.
\]

Note that if we  choose values of $a,b,e,$ and $f$ independently, and 
our choices are sufficiently generic (in this case, $b$ and $e$ nonzero suffices), 
we can complete the matrix.  However, the values of $c$ and $d$ depend on the four entries that are already specified, and the rank one completion is unique. In the language of algebraic matroids, 
$\{a,b,e,f\}$ is a maximal independent set of entries in a rank one
$2 \times 3$ matrix. However, not all subsets of four entries are independent, as the $2 \times 2$ minors are algebraic dependence relations.  Indeed, if $\{a,b,d,e\}$ are chosen generically, they will not satisfy Equation (1).
\end{ex}

A similar setup appears in distance geometry, where the fundamental question is to
determine if a list of positive real numbers could represent pairwise distances among
a set of $n$ points in $\RR^d$.

\begin{ex}\label{ex: rigidity} 
Let $G$ be a graph on vertices $\{1, \ldots, n\}$ with
nonnegative edge weights $\ell_{ij}.$ If the $\ell_{ij}$ represent squared distances between
points in $\RR^d,$ they must satisfy various inequalities (e.g., 
they must be nonnegative and satisfy the triangle inequality) 
as well as polynomial relations.

We examine the simplest case, where $\ell_{12}, \ell_{13}, \ell_{23}$ are the (squared)
pairwise distances between three points. There are no polynomial conditions on the lengths
of the edges of a triangle in dimensions $d\ge 2$. However, if the three points lie on a
line, then the area of the triangle with these vertices must be zero. The (squared) area of
a triangle in terms of its edges is given by the classical Heron formula:
\[ 
    A^2 = s(s - \sqrt{\ell_{12}}) (s - \sqrt{\ell_{13}})(s - \sqrt{\ell_{23}}), 
\] 
where $s = \frac{1}{2}(\sqrt{\ell_{12}} + \sqrt{\ell_{13}} + \sqrt{\ell_{23}})$.

The quantity $A^2$ may also be computed by taking $\frac1{16}\det M_3$, where 
\[
    M_3 = \begin{pmatrix}
    2 \ell_{13} & \ell_{13} + \ell_{23} - \ell_{12} \\
    \ell_{13} + \ell_{23} - \ell_{12} & 2 \ell_{23}
    \end{pmatrix}.
\]
Hence, in dimension $d=1$, the squared edge lengths of a 
triangle must satisfy the polynomial relation $\det M_3 = 0$.
The matrix $M_3$ is two times the \textit{Gram matrix} 
of pairwise dot products among the vectors $\bv_1 := \bp_1 - \bp_3$
and $\bv_2 := \bp_2 - \bp_3$, where the $\bp_1,\bp_2,\bp_3$ are unknown points in 
$\RR^d$, which we can check via the computation
\[
    \ell_{12} = \|\bp_1 - \bp_2\|^2 = (\bv_1 - \bv_2)\cdot (\bv_1 - \bv_2) =
    \bv_1\cdot \bv_1 + \bv_2\cdot \bv_2 - 2\bv_1\cdot \bv_2 =  
    \ell_{13} + \ell_{23} - 2\bv_1\cdot \bv_2
\]
This derivation, due to Schoenberg \cite{schoenberg} and Young and Householder \cite{youngHouseholder}, works for more points
(the Gram matrix is $(n-1)\times (n-1)$ for $n$ points) and
any dimension (the Gram matrix of 
point set with $d$-dimensional affine span has rank $d$).  
A related classical construction, the Cayley--Menger matrix,
is due to Menger \cite{menger}.

What we see is that the polynomial relations constraining squared distances of a
$d$-dimensional point set are all derived from the $(d+1)\times (d+1)$ minors of a
Gram matrix. These polynomial relations govern how independently the interpoint
distances may be chosen. For example, we see that if three points are collinear, then 
we are free to choose two of the interpoint distances in any way.
Once these are chosen, there are (at most) two possibilities 
for the third.
\end{ex}

At their core, the questions that we ask in Examples \ref{ex: matrix} and \ref{ex:
rigidity} are about trying to determine to what extent certain unknown values
(distances or matrix entries), are independent of the known ones. Matroids provide a
combinatorial abstraction for the study of independence. This perspective was brought
to distance geometry by Lov\'asz and Yemini \cite{lovasz}. The point of view there is
that the Jacobian of distance constraints defines a linear matroid; by analogy, a
similar idea applies to matrix completion in work of Singer and Cucuringu  \cite{singer}.

Recently, work on problems like these has focused on the fact that the matroids
appearing are \textit{algebraic}. In addition to dependent sets we also have the
specific polynomials witnessing the dependencies. This aspect of algebraic matroids
has been understood for some time, going back to Dress and Lov\'asz in \cite{dressLovasz}, 
actually exploiting them in applications seems to be newer (see 
\cite{kiralyTheran2,kiralyTheran,grossSullivant}).

Notions of independence abound in other applications as well. For example, chemical
reaction networks with mass-action dynamics can be described by a polynomial system of
ODE's. The algebraic properties of these systems at steady state were first exploited by 
Gatermann \cite{gatermann} and further developed by 
Craciun, Dickenstein, Shiu and Sturmfels \cite{craciun}. 
If a chemist
identifies an algebraically dependent set of variables, then she can perform
experiments to determine whether the corresponding substances are related
experimentally. These dependence relations on subsets, along with their algebraic
properties, were used by Gross, Harrington, Rosen and Sturmfels \cite{grossHarrington} to simplify computations.

\subsection{Guide to reading}
The sequel is structured as follows.  We first 
 briefly recall the general definition of a
matroid. In the subsequent sections we will discuss three ways of defining algebraic
matroids: via a prime ideal, an algebraic variety, or a field extension. 
Historically, the latter was the standard definition, but the first two are 
more natural in modern applications.  We will see that all three definitions are
equivalent, and that there are canonical ways to move between them.  We then conclude by
revisiting the applications disucssed in the introduction in more detail.

\section{Matroids: axiomatizing (in)dependence.} \label{sec: matroid axiom}

The original definition of a matroid is by Whitney \cite{whitney}, who wanted to
simultaneously capture notions of independence in linear algebra and graph theory. The
terminology, with ``bases'' borrowed from linear algebra and ``circuits'' from graph
theory, reflects these origins. It is not surprising that contemporaneous
mathematicians such as van der Waerden, Birkhoff, and Maclane were also drawn into
this circle of ideas. As Kung writes in \cite{kung},
\begin{quotation}    
It was natural, in a decade when the axiomatic method was still a fresh idea, to
attempt to find the fundamental properties of dependence common to these notions,
postulate them as axioms, and derive their common properties from the axioms in a
purely axiomatic manner.
\end{quotation}
We present these axioms in this section.

\begin{defin}\label{def: independent}
	A \emph{matroid} $(E,\cI)$ is a pair where $E$ is a 
	finite set and $\cI\subseteq 2^E$ satisfies
	\begin{enumerate}
		\item $\emptyset\in \cI$
		\item If $I_2\subseteq I_1\in \cI$, then 
		$I_2\in \cI$.
		\item If $I_1$ and $I_2$ are in $\cI$ 
		and $|I_2| > |I_1|$, there is $x\in I_2\setminus I_1$
		so that $I_1\cup \{x\}\in \cI$.
	\end{enumerate}
The sets $I\in \cI$ are called \emph{independent}.
    
The complement of $\cI$ is denoted $\cD,$  the \emph{dependent sets}.
The subset $\cC \subseteq \cD$ of inclusion-wise minimal dependent sets is the set of  
\emph{circuits} of the matroid. Finally, $\cB \subseteq \cI$ of maximal independent sets is 
the set of \emph{bases} of $(E,\cI)$. The bases are all the same size, 
which is called the \emph{rank} of the matroid; 
more generally, the rank of a subset $A\subseteq E$ is the
maximum size of an independent subset of $A$.
\end{defin}
Intuitively, independence should be preserved by taking subsets, and this gives the
motivation for the first two axioms. For the last axiom (augmentation), recall that in
linear algebra any linearly independent set of vectors can always be augmented with
some vector from a larger linearly independent set without creating a dependence.

As the name suggests, a ``matroid'' is an abstract version of a matrix, and every
matrix gives rise to a matroid. If 
$M =  (\bx_1\, \cdots\, \bx_n)$ is an $m\times n$
matrix with columns $\bx_i \in \RR^m$, we define $\cI_M$ to be the set of all $I
\subseteq [n]$ with $\{\bx_i \mid i \in I\}$ linearly independent. The reader may check
that the axioms are satisfied in Example \ref{ex: vec config} by inspection and the
verification in general is a simple linear algebra exercise.

\begin{ex}\label{ex: vec config}
    Let 
    \[
      A=  \begin{pmatrix}
        1 & 1 & 1 & 0 & 0 & 0\\
        0 & 0 & 0 & 1 & 1 & 1\\
        1 & 0 & 0 & 1 & 0 & 0\\
        0 & 1 & 0 & 0 & 1 & 0\\
        0 & 0 & 1 & 0 & 0 & 1
        \end{pmatrix}.
    \]
If we label the columns $a, \ldots, f$ from right to left, then we can see that
the columns with labels $\{a,b,e,f\}$ form a basis while the columns $\{a,b,d,e\}$
form a circuit. In fact, the column vectors of $A$ all satisfy the same
dependencies as the entries of $M$ in Example \ref{ex: matrix}.  We will see later 
that this is not an accident.
\end{ex}

It is natural to ask if every matroid arises from a matrix in this way. Whitney posed
this question in his foundational paper \cite{whitney} where he proposed that the
matroid on seven elements of the Fano projective plane whose circuits are depicted in
Figure \ref{fig: fano} was a ``matroid with no corresponding matrix.'' However,
Whitney's proof does not hold in characteristic 2 and indeed there is a $3 \times 7$
matrix with entries in $\mathbb{F}_2$ representing this matroid. Whitney was quite
aware of this, but in his language, a matrix meant a matrix with complex entries.

\begin{figure}[htbp]
	\centering
	\subfloat[][Fano matroid]{\includegraphics[height=0.32\textwidth]{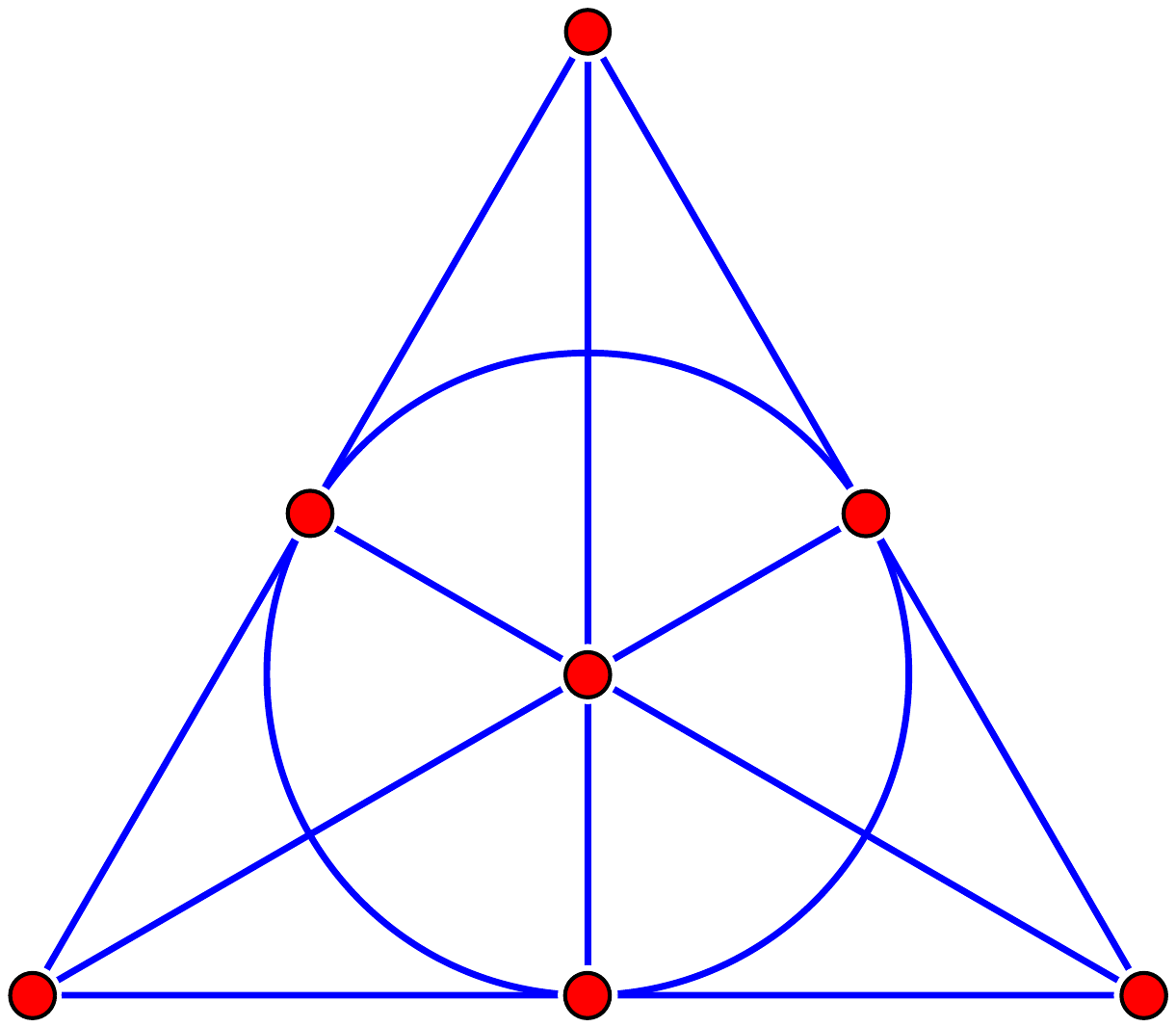}\label{fig: fano}} \hspace{2cm}
    \subfloat[][non-Pappus matroid]{\includegraphics[height=0.24\textwidth]{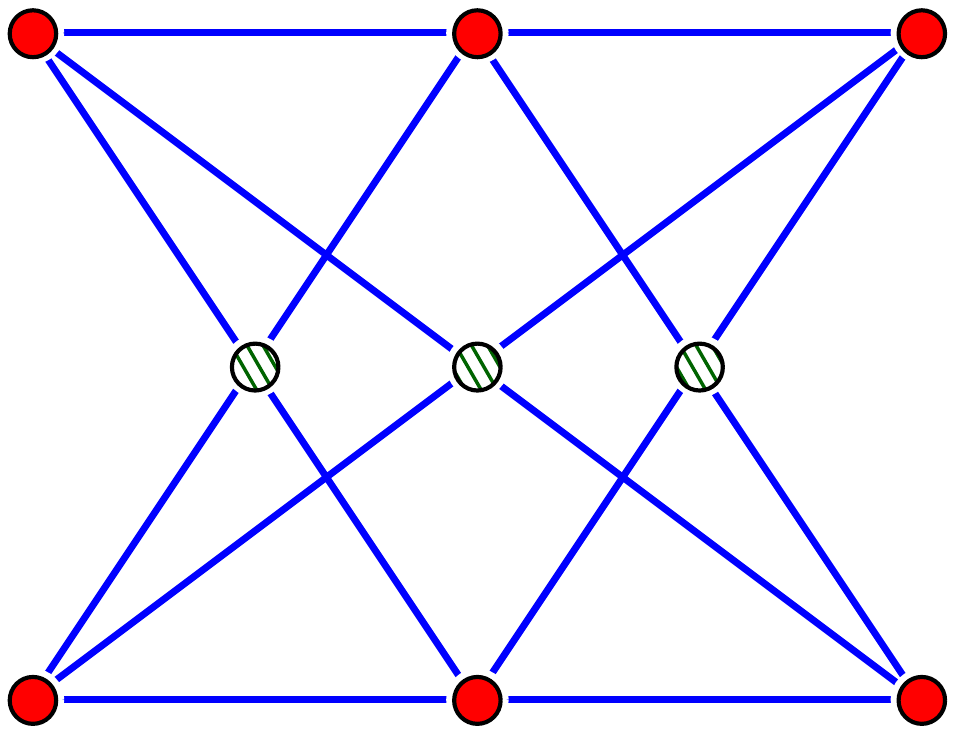}\label{fig: nonpappus}}
\caption{The Fano and non-Pappus matroids. These are rank $3$ matroids visualized as
follows: the elements of the ground set are the points; every set of three points is
independent unless there
 is a curve going through it; no set of four points is
independent. The non-Pappus matroid gets its name from the fact that Pappus's theorem
in projective geometry implies that the hatched points in (B) must be collinear, but
they are independent in the non-Pappus matroid.}
\end{figure}
The next year, Mac Lane published a paper \cite{maclane} attributing to Whitney an
example of a rank $3$ matroid on the set $\{1,\ldots,9\}$ whose dependencies are given
in Figure \ref{fig: nonpappus}. This matroid has become known as the non-Pappus
matroid, because (as Mac Lane notes) it forces a violation of Pappus's theorem.
Pappus's theorem is valid over all fields, so Mac Lane's example is the first
published matroid not representable over any field.

Whitney introduced what he called the ``cycle matroid of a graph'' \cite{whitney}
which has come to be called a {\em graphic matroid}. Given a graph $G = (V,E)$ we
define the set $\cI_G$ to be the subsets of edges that do not contain any circuits. At
the heart of the verification that these sets satisfy the axioms in Definition
\ref{def: independent} is the fact that all maximal independent sets in a connected
component of a graph are spanning trees. 
\begin{ex}
Consider the complete bipartite graph $K_{2,3}$ in Figure \ref{fig:K23} and define the
matroid $(\{a,b,c,d,e,f\},\cI_{K_{2,3}})$. We depict a basis $\{a,b,e,f\}$ in Figure
\ref{fig: K23_Ind} and a circuit $\{a,b,d,e\}$ in Figure \ref{fig: K23_circuit}. The
reader may notice that we again have a set of size six (edges, in this case) whose
elements satsify the same dependence relations as in Examples \ref{ex: matrix} and
\ref{ex: vec config}.

	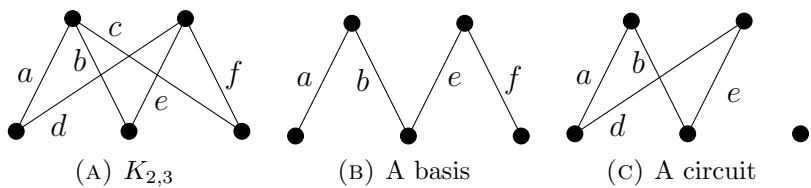
\begin{figure}[htbp]
	\centering
	\subfloat[][$K_{2,3}$]{
			\begin{tikzpicture}[scale=1.5]
			\node (1) at (0,0) [left]{};
			\node (2) at (-1,0)[left] {};
			\node (3) at (1,0) [left]{};
			\node (4) at (-.5,1) [left]{};
			\node (5) at (.5,1) [left]{};
			\filldraw (0,0) circle (2pt);
			\filldraw (-1,0) circle (2pt);
			\filldraw (1,0) circle (2pt);
			\filldraw (-.5,1) circle (2pt);
			\filldraw (.5,1) circle (2pt);
			\draw (-1,0) -- (-.5,1) node [midway, left]{$a$} -- (0,0) node [very near start, below=2pt] {$b$}-- (.5,1) node [near start, right] {$e$}-- (1,0)  node [midway, right]{$f$};
			\draw (-.5,1) -- (1,0) node [near start, above]{$c$};
			\draw (.5,1) -- (-1,0) node [near end, below]{$d$};
			\end{tikzpicture}
			\label{fig:K23}}
    \subfloat[][A basis]{
    \begin{tikzpicture}[scale=1.5]
			\node (1) at (0,0) [left]{};
			\node (2) at (-1,0)[left] {};
			\node (3) at (1,0) [left]{};
			\node (4) at (-.5,1) [left]{};
			\node (5) at (.5,1) [left]{};
			\filldraw (0,0) circle (2pt);
			\filldraw (-1,0) circle (2pt);
			\filldraw (1,0) circle (2pt);
			\filldraw (-.5,1) circle (2pt);
			\filldraw (.5,1) circle (2pt);
		\draw (-1,0) -- (-.5,1) node [midway, left]{$a$} -- (0,0) node [midway, left] {$b$}-- (.5,1) node [midway, right] {$e$}-- (1,0)  node [midway, right]{$f$};
			\end{tikzpicture}
    \label{fig: K23_Ind}}
    \subfloat[][A circuit]{
    \begin{tikzpicture}[scale=1.5]
			\node (1) at (0,0) [left]{};
			\node (2) at (-1,0)[left] {};
			\node (3) at (1,0) [left]{};
			\node (4) at (-.5,1) [left]{};
			\node (5) at (.5,1) [left]{};
			\filldraw (0,0) circle (2pt);
			\filldraw (-1,0) circle (2pt);
			\filldraw (1,0) circle (2pt);
			\filldraw (-.5,1) circle (2pt);
			\filldraw (.5,1) circle (2pt);
			\draw (-1,0) -- (-.5,1)node [midway, left]{$a$}  -- (0,0)node [very near start, below=2pt] {$b$} -- (.5,1) node [midway, below right] {$e$}-- (-1,0)node [very near end, below=2pt, right=1pt]{$d$};
			\end{tikzpicture}
    \label{fig: K23_circuit}}
\caption{A basis and a circuit for the graphic matroid on $K_{2,3}.$}
\end{figure}
\end{ex}

Kung \cite[page 18]{kung} notes that a ``curious feature of matroid theory, not
shared by other areas of mathematics is that there are many natural and quite
different ways of defining a matroid.'' Rota expresses a similar sentiment in his
introduction to \cite{kung}:

\begin{quotation} 
\ldots the unique peculiarity of this field, the exceptional variety of cryptomorphic
definitions for a matroid, embarassingly unrelated to each other and exhibiting wholly
different mathematical pedigrees. \end{quotation}Indeed, the axioms defining a matroid
can be reformulated in terms of bases, rank, dependent sets, or circuits.  A number 
of reference works (e.g.,  \cite{welsh, oxley, kung}) describe all of these
in detail.  Since we need them in what follows, we now state the axiomitization of matroids by 
circuits.

\pagebreak

\begin{defin}\label{def: dependent}
	A \emph{matroid} is a pair $(E,\cC)$, 
	where $E$ is a finite set and $\cC \subseteq 2^E$ satisfies
	\begin{enumerate}
		\item $\emptyset\notin \cC$.
		\item If $C_1\in \cC$ and $C_2\subsetneq C_1$, then $C_2\notin \cC$.
		\item If $C_1, C_2 \in \cC$, then for any $x\in C_1\cap C_2$, there is 
		a $C_3\in \cC$ such that 
		$C_3\subseteq (C_1\cup C_2)\setminus \{x\}$.
	\end{enumerate}
	The sets in $\cC$ are the \emph{circuits} of the matroid.
\end{defin}
Here, too, the first two axioms are more intuitive than the third. The third axiom,
known as the   ``circuit elimination axiom,'' 
is natural from the point of view of linear algebra, as
two dependence relations in which a vector $\bx$ appears with nonzero coefficient can
be combined to get a new dependence relation in which $\bx$ has been eliminated.

\section{Matroids via elimination and projection.} \label{sec: elim and project}	
The first definition of an algebraic matroid that we will present is formulated in terms of
a prime ideal in a polynomial ring. Circuits will be encoded via certain \emph{circuit
polynomials}.  To verify that our definition indeed gives a matroid, we establish the 
circuit elimination axiom using classical elimination theory.
Results in the area may be attributed to B\'ezout in the 18th century and later to Cayley,
Sylvester, and Macaulay in the 19th century and early 20th century. Elimination theory
fell out of fashion in the mid-twentieth century; Weil \cite{weil} wrote that work of
Chevalley on extensions of specializations ``eliminate[s] from algebraic geometry the
last traces of elimination-theory...,'' illustrating the attitude of that era.
However, computational advances in the last 40 years ignited a resurgence of interest
in elimination theory, famously inspiring Abhyankar \cite{Abhyankar2004} to write a poem
containing the line ``Eliminate the eliminators of elimination theory.'' We briefly
review the relevant results from elimination theory  and then define algebraic matroids.

\subsection{Elimination theory and resultants}\label{sec: elim}
We will typically be working with a polynomial ring $k[x_1,\ldots, x_r]$, and our goal
will be to eliminate a single variable, say $x_r,$ from two irreducible polynomials
$p(x_1, \ldots, x_r)$ and $q(x_1, \ldots, x_r)$ by finding polynomials $A(x_1, \ldots,
x_r)$ and $B(x_1, \ldots, x_r)$ so that $Ap+Bq$ is a polynomial in $k[x_1, \ldots,
x_{r-1}]$. For example, we might want to eliminate the variable $d$ in the polynomials
$p = ae-bd$ and $q = af-cd$ in Example \ref{ex: matrix}. We see that $cp-bq = ace-bcd
- abf +bcd = ace-abf$ is a polynomial combination of $p$ and $q$ not containing $d.$
Now we explain how this kind of elimination can be performed in general.
	 
Let $R$ be an
integral domain (typically $R = k[x_1, \ldots, x_{r-1}]$) and $R[x]$ be
the ring of polynomials in $x$ with coefficients in $R$. We denote by $R[x]_{< n}$ the
$R$-submodule of polynomials of degree less than $n$ in $x$. With this notation we can
define the resultant.
\begin{defin}[Sylvester's resultant]\label{def: resultant}
	Let $R$ be an integral domain and let $p$ and $q$
	be polynomials of degrees $m$ and $n$ in $R[x]$.  The 
	map $(a,b)\mapsto ap + bq$ is an $R$-linear map
	$R[x]_{< n}\oplus R[x]_{< m}\to R[x]_{< n+m}$.  The \emph{resultant}
	$\Res(p,q,x)$ is the determinant of this map.
\end{defin}

For example, we can perform the previous elimination of $d$ from $ae-bd$ and $af-cd$
by taking the determinant of
	\[
	\begin{pmatrix}
	-b & -c\\
	ae & af
	\end{pmatrix}.
	\]
Theorem \ref{thm: resultant} tells us that if $p$ and $q$ have no common factors, then
$\Res(p,q,x)$ is a polynomial combination of $p$ and $q$ in which $x$ has been
eliminated. An account of the proof can be found in  \cite[Section 3.6]{CLO}.
	
\begin{thm}\label{thm:  resultant}
	The resultant of polynomials $p$ and $q$ in $R[x]$ satisfies
	the properties:
	\begin{enumerate}
		\item $\Res(p,q,x)\in \langle p,q\rangle \cap R$;
		\item $\Res(p,q,x) \equiv 0$ if and only if $p$ and $q$
		have a common factor in $R[x]$ of \\ positive 
		degree in $x$.
	\end{enumerate}
\end{thm}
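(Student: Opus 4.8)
The plan is to prove the two properties of the resultant directly from its definition as the determinant of the Sylvester map $\varphi\colon R[x]_{<n}\oplus R[x]_{<m}\to R[x]_{<n+m}$ given by $(a,b)\mapsto ap+bq$. I will work with the matrix of $\varphi$ in the monomial bases $1,x,\dots,x^{n-1}$ and $1,x,\dots,x^{m-1}$ on the source and $1,x,\dots,x^{n+m-1}$ on the target; this is the usual Sylvester matrix, whose entries are the coefficients of $p$ and $q$, so its determinant $\Res(p,q,x)$ is manifestly an element of $R$.

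For property (1), I would use Cramer's rule, or equivalently the cofactor expansion identity $(\operatorname{adj}\varphi)\,\varphi = \det(\varphi)\,\mathrm{Id}$. Applying the cofactor/adjugate matrix to the image of the basis vector $1\in R[x]_{<n+m}$, I obtain specific polynomials $A\in R[x]_{<n}$ and $B\in R[x]_{<m}$ (read off from the appropriate column of the adjugate) satisfying $A p + B q = \Res(p,q,x)\cdot 1$, since applying $\varphi$ to the corresponding source element yields $\det(\varphi)$ times the constant polynomial. This exhibits $\Res(p,q,x)\in\langle p,q\rangle$, and as it already lies in $R$ we get $\Res(p,q,x)\in\langle p,q\rangle\cap R$.

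For property (2), the key observation is that $\det(\varphi)=0$ if and only if $\varphi$ fails to be injective, because $R$ is an integral domain and we may pass to its fraction field $K$, over which a square matrix has zero determinant exactly when it has a nontrivial kernel. A nonzero kernel element is a pair $(a,b)$, not both zero, with $ap+bq=0$ and $\deg a<n$, $\deg b<m$. I would then argue that such a relation $ap = -bq$ forces $p$ and $q$ to share a common factor of positive degree in $x$: in $K[x]$, which is a UFD, if $p$ and $q$ were coprime then $p\mid b$, contradicting $\deg b<m=\deg p$ (unless $b=0$, which then forces $a=0$ as well since $q\neq 0$). Conversely, a common factor $h$ of positive degree in $x$ gives $p=h\,p_1$, $q=h\,q_1$, and then $(q_1,-p_1)$ is a nonzero kernel element with the right degree bounds, so $\det(\varphi)=0$.

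The main obstacle, and the point requiring the most care, is the bookkeeping on degrees in the integral-domain setting: passing from a common factor in $R[x]$ to one in $K[x]$ and back, and making sure the leading coefficients behave so that the degree bounds $\deg a<n$ and $\deg b<m$ are genuinely respected rather than collapsing. I would handle the subtlety of the integral domain versus field distinction by invoking Gauss's lemma to transfer factorizations between $R[x]$ and $K[x]$, so that ``common factor of positive degree in $x$'' means the same thing over $R$ and over $K$. Once the field-of-fractions reduction and Gauss's lemma are in place, both directions of the equivalence in (2) reduce to the coprimality argument in the UFD $K[x]$.
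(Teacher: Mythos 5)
Your argument is correct in substance, and it is worth pointing out that the paper does not prove this theorem at all --- it defers to \cite[Section 3.6]{CLO} --- so there is no internal proof to compare against. What you propose is the standard argument: part (1) from the adjugate identity $\operatorname{adj}(\varphi)\,\varphi=\det(\varphi)\,\mathrm{Id}$ applied to the basis vector $1$ of the target (equivalently, Cramer's rule, or column operations on the Sylvester matrix), and part (2) by passing to the fraction field $K$, identifying vanishing of the determinant with a nontrivial kernel element $(a,b)$, and running the coprimality argument in the UFD $K[x]$. The degree bookkeeping you describe is exactly right; the only slip is cosmetic: when $b=0$, the relation $ap=0$ forces $a=0$ because $p\neq 0$, not because $q\neq 0$.

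The one substantive issue is the descent step in (2), which you correctly isolate but dispatch too quickly. Gauss's lemma transfers factorizations between $R[x]$ and $K[x]$ only when $R$ is a UFD (or at least a GCD domain); the theorem's hypothesis is merely that $R$ is an integral domain, and in that generality the ``only if'' direction of (2) is false as stated. Take $R=\Z[\sqrt{-5}]$, $p=2x-(1+\sqrt{-5})$, and $q=(1-\sqrt{-5})x-3$: both vanish at $(1+\sqrt{-5})/2$, and indeed $\Res(p,q,x)=-6+(1-\sqrt{-5})(1+\sqrt{-5})=0$, but a common factor $ax+b$ of positive degree in $R[x]$ would make $a$ a common divisor of $2$ and $1-\sqrt{-5}$, hence a unit (no element of $\Z[\sqrt{-5}]$ has norm $2$), forcing $b=\mp(1+\sqrt{-5})/2\notin R$. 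So over a general integral domain one can only conclude a common factor in $K[x]$. None of this matters for the paper's use of the theorem, where $R=k[x_1,\dots,x_{n-1}]$ is a UFD and your Gauss's-lemma route closes the argument; but you should state the UFD hypothesis explicitly at the point where you invoke it.
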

We will apply this theorem to distinct irreducible polynomials in a 
prime ideal. Since we need some flexibility in terms of which 
variable to eliminate, we define the \textit{support} of a 
polynomial $p\in k[x_1,\ldots, x_n]$ to be the set of variables appearing in it. This next corollary summarizes what we need.
\begin{cor}\label{cor: resultant}
	Let $k$ be a field, and $P$ be an ideal in 
	$k[x_1,\ldots, x_n]$.  If $p$ and $q$ are different 
	irreducible polynomials in $P$ both supported on $x_n$, then 
	$0\neq \Res(p,q,x_n)\in P\cap k[x_1,\ldots, x_{n-1}]$.
\end{cor}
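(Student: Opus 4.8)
The plan is to derive Corollary~\ref{cor: resultant} directly from Theorem~\ref{thm: resultant} by choosing the right integral domain and verifying the two hypotheses needed to invoke each part of that theorem. First I would set $R = k[x_1,\ldots,x_{n-1}]$ and regard $p$ and $q$ as elements of $R[x_n]$; since $k$ is a field, $R$ is an integral domain, so Theorem~\ref{thm: resultant} applies. The phrase ``supported on $x_n$'' tells us that $x_n$ genuinely appears in both $p$ and $q$, so each has positive degree as a polynomial in $x_n$, and the resultant $\Res(p,q,x_n)$ is well-defined.

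Next I would establish the nonvanishing, $\Res(p,q,x_n) \neq 0$, using part (2) of Theorem~\ref{thm: resultant}: the resultant is zero exactly when $p$ and $q$ share a common factor in $R[x_n]$ of positive degree in $x_n$. The key observation is that $p$ and $q$ are \emph{irreducible} and \emph{distinct}. If they had a common factor of positive $x_n$-degree, then by irreducibility that factor would have to be an associate of $p$ and also of $q$, forcing $p$ and $q$ to be associates of one another. One must rule this out: being distinct in a polynomial ring does not immediately preclude being associates (they could differ by a unit), so the real content is to argue that $p$ and $q$ are not associates. This is the step I expect to be the main obstacle, and it is where some care is needed about what ``different irreducible polynomials'' is meant to guarantee. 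The cleanest route is to note that the units of $R[x_n] = k[x_1,\ldots,x_n]$ are exactly the nonzero constants in $k$, so two associates differ by a nonzero scalar; one then reads ``different'' as ``not scalar multiples of each other,'' which is the natural convention when listing circuit polynomials. Under this reading no common positive-degree factor exists, and part (2) gives $\Res(p,q,x_n) \neq 0$.

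Finally, I would pin down the membership $\Res(p,q,x_n) \in P \cap k[x_1,\ldots,x_{n-1}]$. Part (1) of Theorem~\ref{thm: resultant} already states $\Res(p,q,x_n) \in \langle p,q\rangle \cap R$, where the ideal $\langle p,q\rangle$ is generated in $R[x_n] = k[x_1,\ldots,x_n]$. Since $p,q \in P$ by hypothesis, we have $\langle p,q\rangle \subseteq P$, so the resultant lies in $P$; and $R = k[x_1,\ldots,x_{n-1}]$ is precisely the subring where $x_n$ has been eliminated. Intersecting these two facts yields $\Res(p,q,x_n) \in P \cap k[x_1,\ldots,x_{n-1}]$, completing the argument. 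In short, the corollary is a direct specialization of the theorem; the only genuinely substantive point is converting the distinctness and irreducibility of $p$ and $q$ into the ``no common factor'' hypothesis required for nonvanishing.
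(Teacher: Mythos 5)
Your proposal is correct and follows essentially the same route as the paper: invoke Theorem~\ref{thm: resultant} over $R = k[x_1,\ldots,x_{n-1}]$, use distinctness and irreducibility to rule out a common factor (hence nonvanishing by part (2)), and use $\langle p,q\rangle \subseteq P$ for the membership claim via part (1). Your extra care about associates versus merely ``different'' polynomials is a legitimate refinement of a point the paper's one-line justification (``since $p$ and $q$ are irreducible, they don't have a common factor'') glosses over, and reading ``different'' as ``non-associate'' is indeed the intended convention.
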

\begin{proof}
	Since $p$ and $q$ are irreducible, they don't have a 
	common factor.  Since $p$ and $q$ are in $P$, certainly
	$\langle p,q\rangle\subseteq P$.  Theorem \ref{thm:  resultant}
	tells us that 
    $0\neq \Res(p,q,x_n)\in \langle p,q\rangle\cap k[x_1,\ldots, x_{n-1}]
	\subseteq P\cap k[x_1,\ldots, x_{n-1}]$.
\end{proof}

\subsection{Algebraic matroids from prime ideals}\label{sec: coord}
    
Given a set of polynomial equations, we can ask what dependencies they
introduce on the variables. If our set of polynomials is 
a prime ideal,  these dependencies satisfy the matroid axioms. 
The characterization of independent coordinates modulo an
ideal in Definition \ref{def: coordinate matroid} can be deduced from 
the definition of independence for elements in a field extension, 
which we give in the next section. We will go in the other direction, 
giving an elementary proof that seems to be folklore.
	
Let $k$ be a field, $E = \{x_1,\ldots, x_n\}$ be
a set of  variables. 
For any $S \subseteq E$ we define
$k[S]$ to be the set of polynomials with variables in $S$ 
and coefficients in $k$.  
	
\begin{defin}\label{def: coordinate matroid}
	Let $k$ be a field, $E = \{x_1, \ldots, x_n\}$ 
	and $P$ b a prime ideal
	in $k[E]$.  Given $S \subseteq E$, we define 
 		\[
 		\cI_P = \{S \subseteq E \mid P \cap k[S]  = \langle 0\rangle\}
 		\]
	to be the set of all subsets of $E$ that are independent modulo $P$. 
    The dependent sets $\cD_P$ are the subsets of $E$ not in $\cI_P$.
 	\end{defin}  
This notion of independence depends on our choice of coordinates. For example, if $P =
\langle x,y \rangle\subseteq k[x,y,z]$, then $\cI_P$ contains a single maximal
independent set, $\{z\}$. However, the ideal $Q = \langle x+2y+3z, x+5y+2z\rangle$,
which can be obtained from $P$ via a linear change of coordinates has three maximal
independent sets, $\{x\}, \{y\},$ and $\{z\}$.  
It is also the case that very different ideals can give
rise to the same independent sets. For example, if $T = \langle x^2-y, xy-z \rangle$,
then the maximal independent sets of $\mI_T$ are $\{x\}, \{y\},$ and $\{z\},$ which
are the same as in those in $\cI_Q$.
	 
We now show that the elements of $\cI_P$ are the independent sets of a matroid. First
we will show that every minimal dependent set $C$ is encoded by an irreducible
polynomial $f_C$ that is unique up to scalar multiple.
	
\begin{thm}\label{thm: circuitpoly}
	Let $k$ be a field and $P$ a prime ideal 
	in $k[x_1,\ldots, x_n]$.  Let $C\subseteq E$.
	If $P\cap k[C]\neq \langle 0 \rangle$ and $P\cap k[C'] = \langle 0\rangle$
	for all $C'\subsetneq C$, then $P\cap k[C]$ is principal and 
	generated by an irreducible polynomial $f_C$.  The support of 
	$f_C$ is all of $C$.
\end{thm}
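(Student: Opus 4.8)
The plan is to exploit that $P \cap k[C]$ is a nonzero prime ideal of the polynomial ring $k[C]$, which is a unique factorization domain, and then to use Corollary \ref{cor: resultant} to rule out the existence of two essentially different irreducible elements. First I would record two easy observations. Since $P$ is prime, its contraction $P \cap k[C]$ along the inclusion $k[C] \hookrightarrow k[E]$ is a prime ideal of $k[C]$, and it is nonzero by hypothesis. Next, any nonzero $f \in P \cap k[C]$ must have $\supp(f) = C$: if $\supp(f) = C' \subsetneq C$, then $f \in P \cap k[C']$, contradicting the minimality assumption that $P \cap k[C'] = \langle 0 \rangle$. Factoring a nonzero element of the prime ideal $P \cap k[C]$ in the UFD $k[C]$ and using primeness, I obtain an irreducible $f_C \in P \cap k[C]$; it has support $C$, and since it involves only the variables in $C$ it remains irreducible in $k[E] = k[C][E \setminus C]$ (a factorization there would have to take place inside $k[C]$ by comparing degrees in the outside variables).

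The heart of the argument is to show that $P \cap k[C]$ contains no irreducible polynomial that is not a scalar multiple of $f_C$; this is where Corollary \ref{cor: resultant} enters and is the step I expect to be the main obstacle. Suppose $h \in P \cap k[C]$ is irreducible and not an associate of $f_C$. By the support observation, both $f_C$ and $h$ are supported on every variable of $C$; fix any $x_j \in C$. Then $f_C$ and $h$ are distinct irreducibles of $P$ both supported on $x_j$, so Corollary \ref{cor: resultant} yields $0 \neq \Res(f_C, h, x_j) \in P \cap k[C \setminus \{x_j\}]$. Since the resultant is a polynomial in the coefficients of $f_C$ and $h$, which lie in $k[C \setminus \{x_j\}]$, this is a nonzero element of $P \cap k[C \setminus \{x_j\}]$ with $C \setminus \{x_j\} \subsetneq C$, contradicting minimality. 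The delicate point to confirm here is that $f_C$ and $h$, being non-associate irreducibles, share no common factor, which is exactly what makes the resultant nonzero via Theorem \ref{thm: resultant}.

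Finally I would deduce principality. Given any nonzero $h \in P \cap k[C]$, I factor it into irreducibles in $k[C]$; since $P \cap k[C]$ is prime, at least one factor lies in $P \cap k[C]$, and by the previous paragraph that factor is a scalar multiple of $f_C$. Hence $f_C \mid h$, so $P \cap k[C] \subseteq \langle f_C \rangle$; the reverse inclusion is immediate, giving $P \cap k[C] = \langle f_C \rangle$. The generator $f_C$ is irreducible and, as already noted, has support equal to all of $C$, and uniqueness up to a scalar follows because the units of $k[C]$ are exactly the nonzero constants.
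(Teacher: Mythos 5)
Your proof is correct and follows essentially the same route as the paper's: UFD factorization plus primeness of $P$ to produce an irreducible element of $P \cap k[C]$, the minimality hypothesis to force every nonzero element of $P \cap k[C]$ to have full support, and Corollary \ref{cor: resultant} to eliminate a shared variable from two distinct irreducibles, contradicting minimality. You are in fact slightly more careful than the paper on one point: you correctly flag that the irreducibles must be \emph{non-associate} (not merely distinct) for them to have no common factor and hence a nonzero resultant, a nuance the paper's statement of Corollary \ref{cor: resultant} glosses over.
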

\begin{proof}
	First suppose that $f\in P\cap k[C]$ is a nonzero polynomial.  Since 
	$k[S]$ is a unique factorization domain, 
	$f$ is a product of irreducible factors $f_1\cdots f_k$.
	Because $P$ is prime, at least one of the $f_i$ is in $P$.  
	Thus, every $f\in P\cap k[C]$ has an irreducible
	factor in $P$.
		
	By the minimality hypothesis on $C$, any polynomial $g$ in 
	$P\cap k[C]$ is supported on all of $C$.  In particular, 
	if $g$ and $h$ are both in $P\cap k[C]$, they must 
	be supported on a common variable, $x_i \in C$.
	When $g$ and $h$ are irreducible, we then have
	the  situation of Corollary \ref{cor: resultant}.
	If $g\neq h$, this implies that 
	$P\cap k[C\setminus \{x_i\}] \neq \langle 0\rangle$.
		
	Using the line of reasoning above, if $f_i$ and $f_j$ are distinct irreducible
factors of $f$ in $P \cap k[C]$, then we can eliminate a variable in common to them,
contradicting the minimality of $C.$ Therefore, we conclude that $f$ is divisible by a
unique irreducible factor in $P \cap k[C],$ which we denote by $f_C.$ Again, by the
minimality of $C$, we can see that $f_C$ must be the unique irreducible polynomial in
$P \cap k[C],$ and that it divides every polynomial in $P \cap k[C].$
	\end{proof}
    
The polynomial $f_C$ appearing in the conclusion of Theorem \ref{thm: circuitpoly} is
called the \emph{circuit polynomial} of the circuit $C$ in $(E,\cI_P)$. This notion
first appears in the paper of Dress and Lov\'asz \cite{dressLovasz}. It was later
explored, in a statistical context, by Kir\'aly and Theran \cite{kiralyTheran2}. The
unpublished preprint of Kir\'aly, Rosen, and Theran \cite{KRT}, 
where this use of 
the term ``circuit polynomial'' originates, 
studies how symmetries of an algebraic matroid are reflected 
in the associated circuit polynomials.
		
Now we are ready show that the sets in
Definition \ref{def: coordinate matroid} are the independent sets of a matroid. 
Instead of checking the independent set axioms directly, we use the circuit 
axioms.
        
\begin{thm}\label{thm: coordinate matroid}
    The pair $(E,\cI_P)$ from Definition \ref{def: coordinate matroid} is 
    a matroid.
\end{thm}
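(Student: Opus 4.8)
The plan is to verify the three circuit axioms of Definition \ref{def: dependent} for the family $\cC$ of inclusion-minimal dependent sets of $(E,\cI_P)$, and then to observe that the independent sets reconstructed from these circuits are exactly $\cI_P$. The bridge I will use repeatedly is the elementary observation that a set $S\subseteq E$ is dependent if and only if it contains a circuit: if $C\subseteq S$ is a circuit then $P\cap k[C]\neq\langle 0\rangle$ together with $k[C]\subseteq k[S]$ forces $P\cap k[S]\neq\langle 0\rangle$, while conversely any dependent $S$ is finite and hence contains an inclusion-minimal dependent subset, which is by definition a circuit. This identifies $\cI_P$ with the sets containing no circuit, so once $\cC$ is shown to satisfy Definition \ref{def: dependent}, the cryptomorphism between the circuit and independent-set axiomatizations yields that $(E,\cI_P)$ is a matroid.

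The first two circuit axioms are immediate. For axiom (1), since $P$ is prime it is proper, so it contains no nonzero constant; as $k[\emptyset]=k$ we get $P\cap k[\emptyset]=\langle 0\rangle$, i.e.\ $\emptyset\in\cI_P$, and hence $\emptyset\notin\cC$. Axiom (2) holds because the members of $\cC$ are inclusion-minimal dependent sets by construction: if $C_1\in\cC$ and $C_2\subsetneq C_1$, then $C_2$ is independent, hence not a circuit.

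The substance of the proof is the circuit elimination axiom (3). Let $C_1\neq C_2$ be circuits and fix $x\in C_1\cap C_2$; relabel so that $x=x_n$. By Theorem \ref{thm: circuitpoly}, each $C_i$ carries an irreducible circuit polynomial $f_{C_i}\in P\cap k[C_i]$ whose support is all of $C_i$, so both $f_{C_1}$ and $f_{C_2}$ are supported on $x_n$. Because $\supp(f_{C_1})=C_1\neq C_2=\supp(f_{C_2})$, the two polynomials cannot be scalar multiples of one another, so they are distinct irreducible polynomials of $P$ both supported on $x_n$. Corollary \ref{cor: resultant} then gives $0\neq\Res(f_{C_1},f_{C_2},x_n)\in P\cap k[x_1,\ldots,x_{n-1}]$. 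Finally, the entries of the Sylvester matrix defining this resultant are coefficients of $f_{C_1}$ and $f_{C_2}$ viewed as polynomials in $x_n$, hence lie in $k[(C_1\cup C_2)\setminus\{x_n\}]$; therefore the resultant lies in $P\cap k[(C_1\cup C_2)\setminus\{x_n\}]$ and is nonzero. Thus $(C_1\cup C_2)\setminus\{x_n\}$ is dependent, and by the bridge observation it contains a circuit $C_3\subseteq(C_1\cup C_2)\setminus\{x_n\}$, which is exactly what axiom (3) demands.

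The main obstacle is the bookkeeping in this last step rather than any deep idea: one must confirm both that the resultant does not vanish and that its support is confined to $(C_1\cup C_2)\setminus\{x_n\}$. Nonvanishing is precisely the content of Corollary \ref{cor: resultant}, which in turn rests on the irreducibility of the $f_{C_i}$ so that they share no common factor, while the support containment follows from reading off the Sylvester matrix. I would also note explicitly that the axiom is applied to distinct circuits $C_1\neq C_2$, matching the standard circuit-elimination axiom, since for $C_1=C_2$ no $C_3\subseteq C_1\setminus\{x\}$ could exist.
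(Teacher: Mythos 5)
Your proof is correct and follows essentially the same route as the paper: verify the circuit axioms for the inclusion-minimal dependent sets, using Theorem \ref{thm: circuitpoly} to produce two distinct irreducible circuit polynomials sharing the variable $x$ and Corollary \ref{cor: resultant} to eliminate it, then pass back to independent sets via the observation that dependent sets contain circuits. Your extra care about requiring $C_1\neq C_2$ in the elimination axiom and about the resultant's support lying in $(C_1\cup C_2)\setminus\{x\}$ only makes explicit details the paper leaves implicit.
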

\begin{proof}
With respect to $\cI_P$, the dependent subsets  
are those sets $S$ for which $P\cap k[S]\neq
\langle 0\rangle$. We define $\cC_P$ to be the dependent subsets of $E$ that are
minimal with respect to inclusion. The result will follow once we have checked the
circuit axioms from Definition \ref{def: dependent}.

Certainly $P\cap k[\emptyset]$ is the zero ideal, which implies that $\emptyset
\notin \cC_P$, which is axiom (1). Minimality of the circuits gives axiom (2) by
definition. For use later, we note that if $P\cap k[D]\neq \langle 0\rangle$ then some
subset of $D$ is a circuit by axiom (1).
	    
The interesting axiom is (3). Suppose that $C_1$ and $C_2$ are circuits in $\cC_P$
with $x_i\in C_1\cap C_2$. By Theorem \ref{thm: circuitpoly}, there are distinct
irreducible polynomials $f_{C_1}$ and $f_{C_2}$ both supported on 
$x_i$ in $P\cap k[C_1\cup C_2]$. Corollary \ref{cor: resultant} then implies 
that there exists a nonzero $h \in P\cap k[(C_1\cup C_2)\setminus \{x_i\}]$. 
Since the support of $h$ is contained in 
$(C_1\cup C_2)\setminus \{x_i\}$, $P\cap k[(C_1\cup C_2)\setminus
\{x_i\}] \neq \langle 0\rangle$. 
By the observation above, we have axiom (3).
\end{proof}

Every linear matroid is algebraic, though not all algebraic matroids are linear. To
see that a linear matroid is algebraic, suppose we are given a matroid on the columns
of a $d \times n$ matrix $M$. Let $B = (\bb_1\, \cdots\, \bb_r)$ be a matrix whose columns
form a basis for the kernel of $M$. We'll use the vectors $\bb_i$ to define an ideal
generated by linear forms in the ring $k[x_1,\ldots, x_n].$ Define linear forms $L_1,
\ldots, L_r$ by setting $L_i = \bb_i \cdot (x_1,\ldots,x_n).$ An ideal generated by 
linear forms must
be prime, so $P = \langle L_1, \ldots, L_r \rangle$ defines a matroid $\cI_P,$ where
the linear forms defining dependent sets of variables exactly record the dependencies
among the columns of $M.$

\subsection{Varieties and projections}\label{sec: varieties}
We will construct a geometric counterpart to coordinate matroids, using 
projections of varieties, which we briefly introduce.

Let $k$ be a field and $f_1,\ldots, f_m$ be polynomials in $k[x_1,\ldots, x_n].$ The
common vanishing locus of these polynomials is the \emph{algebraic set} 
\[V =
V(f_1,\ldots, f_m) = \{ \bp \in k^n \mid f_1(\bp) = \cdots = f_m(\bp) = 0\}.
\]
In the \emph{Zariski topology} on $k^n$ a 
set is closed if and only if it is an algebraic set.  The 
\textit{Zariski closure}, denoted $\overline{X}$, 
of a subset $X\subseteq k^n$ is the smallest 
algebraic set containing $X$.  An
algebraic set is called \emph{irreducible} if it is not union of two nonempty
algebraic sets, and we call an irreducible algebraic set a \emph{variety}. For
example, the three equations 
$ae-bd =0, af-cd =0,$ and $bf-ce=0$ 
in Example \ref{ex:
matrix} define a variety in $k^6$ whose points correspond to $2\times 3$ matrices with
rank at most one.

Although we have defined an algebraic set as the solution set of a finite system of
polynomial equations, it is not hard to check that if $I = \langle f_1,\ldots, f_m
\rangle$ is the ideal generated by the polynomials $f_i$, then $V(f_1,\ldots, f_m) =
V(I).$ Conversely, given an algebraic set $V \subseteq k^n,$ one may define 
\[
    I(V) = \{ f \in k[x_1,\ldots, x_n] \mid f(\bp) = 0 \ \forall \bp \in V\},
\]
the ideal of all polynomials that vanish on $V.$

What is the relationship between an algebraic set and its vanishing ideal?
If $V = V(f_1, \ldots, f_m)\subseteq k^n$ is algebraic, then $V(I(V)) = V$.  
Since $\{f_1, \ldots f_n\}\subseteq I(V)$, $V(I(V))\subseteq V$.  By 
definition, every $f\in I(V)$ vanishes on $V$, so $V\subseteq V(I(V))$.
Starting from an ideal $I\subseteq k[x_1, \ldots, x_n]$, we don't 
necessarily have $I(V(I)) = I$. For example, if 
$k = \RR$ and $I = \langle x^2 + y^2 +1 \rangle,$ then 
$V(I) = \emptyset,$ so $I(V(I)) = \langle 0 \rangle.$ 
However, over an algebraically closed
field, Hilbert's famous Nullstellensatz says that $I(V(I)) = I$ holds 
if $I$ is a radical ideal.  In this setting, the fundamental 
``algebra-geometry dictionary'' (see, e.g., \cite[chapter 4]{CLO})
says that there is a bijection $V\mapsto I(V)$ between irreducible varieties in $k^n$ and  
prime ideals in $k[x_1, \ldots, x_n]$.

Now that we have a geometric counterpart to prime ideals when $k$ is 
closed, we need an analogue for elimination.  For $S\subseteq \{1, \ldots, n\}$
define a projection  $\pi_S:k^n \to k^{|S|}$
by $\pi_S(p_1,\ldots, p_n) = (p_i \mid i \in S),$ 
where we preserve the order of the coordinates.  
We are 
going to be comparing two kinds of objects, so we take 
$E = \{1,\ldots, n\}$ as the common ground set that indexes both 
variables and standard basis vectors of $k^n$.  For 
$S\subseteq E$, extend the notation $k[S]$ to means $k[x_i : i\in S]$.

If $V\subseteq k^n$ is algebraic, then $\pi_S(V)$ corresponds to eliminating 
the variables not in $S$ from its vanishing ideal 
$I\subseteq k[x_1, \ldots,x_n]$.  Suppose that 
$f\in I\cap k[S]$ and $\bp\in V$.  Certainly $f(\bp) = 0$;
more interestingly, since $f$ only sees variables in $S$, $f(\pi_S(\bp)) = 0$
as well.  Hence $\pi_S(V)\subseteq V(I\cap k[S])$; since $V(I\cap k[S])$
is closed, it contains $\overline{\pi_S(V)}$ as well.  When $k$ is 
closed and $V$ is irreducible, 
we can get more.  This affine version of the ``closure theorem'' 
is a key technical tool for us.
\begin{thm}\label{thm: closure}
	Let $k$ be an algebraically closed field, and let $V\subseteq k^n$
	be an irreducible algebraic set with ideal 
	$I = I(V) \subseteq k[x_1,\ldots, x_n]$.  Then for all $S\subseteq E$:
	\begin{enumerate}
		\item $\overline{\pi_S(V)}$ is irreducible; 
		\item $I(\overline{\pi_S(V)}) = I\cap k[S]$.
	\end{enumerate}
\end{thm}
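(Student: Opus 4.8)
The plan is to prove part (2) first, since it is essentially elementary and makes part (1) fall out as a consequence. The one identity driving everything is that for a polynomial $f \in k[S]$ and a point $\bp \in V$, one has $f(\pi_S(\bp)) = f(\bp)$: since $f$ only involves the coordinates indexed by $S$, evaluating it at the truncated point $\pi_S(\bp)$ gives the same value as evaluating the same polynomial, viewed in $k[E]$, at $\bp$. Combined with the standard fact that a set and its Zariski closure have the same vanishing ideal, this pins down $I(\overline{\pi_S(V)})$ completely.

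For part (2), I would first observe $I(\overline{\pi_S(V)}) = I(\pi_S(V))$, since the ideal of a set equals the ideal of its closure. An element $f \in I(\pi_S(V))$ is, by definition, a polynomial in $k[S]$ vanishing at every point of $\pi_S(V)$, i.e. $f(\pi_S(\bp)) = 0$ for all $\bp \in V$. By the identity above this says exactly that $f(\bp) = 0$ for all $\bp \in V$, that is $f \in I(V) = I$; conversely any $f \in I \cap k[S]$ clearly vanishes on $\pi_S(V)$. Hence $I(\overline{\pi_S(V)}) = I \cap k[S]$. This also upgrades the inclusion $\overline{\pi_S(V)} \subseteq V(I \cap k[S])$ recorded before the theorem to an equality, because applying $V(\cdot)$ and using $V(I(X)) = X$ for closed $X$ gives $\overline{\pi_S(V)} = V(I(\overline{\pi_S(V)})) = V(I \cap k[S])$.

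For part (1), I would deduce irreducibility from (2). Since $V$ is irreducible, its ideal $I = I(V)$ is prime. The intersection $I \cap k[S]$ is the contraction of $I$ along the inclusion $k[S] \hookrightarrow k[E]$, and the preimage of a prime ideal under a ring homomorphism is prime, so $I \cap k[S]$ is prime. By part (2) this prime ideal equals $I(\overline{\pi_S(V)})$, and a nonempty closed set whose vanishing ideal is prime is irreducible; as $\overline{\pi_S(V)}$ contains the nonempty set $\pi_S(V)$, it is irreducible. Alternatively, one can obtain (1) purely topologically: $\pi_S$ is Zariski continuous, the continuous image of an irreducible set is irreducible, and the closure of an irreducible set is irreducible.

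The part flagged as ``getting more'' --- and the only place where I expect to tread carefully --- is the reverse containment $V(I \cap k[S]) \subseteq \overline{\pi_S(V)}$, which in the classical Closure Theorem (e.g. \cite[Section 3.6]{CLO}) genuinely requires $k$ algebraically closed. In the present formulation, working directly with $I = I(V)$ rather than with $V(I)$ for an arbitrary ideal, this containment comes for free from the ideal computation in part (2) together with $V(I(X)) = X$; algebraic closure enters only through the algebra--geometry dictionary that lets us pass between irreducible varieties and prime ideals. The main bookkeeping care is keeping straight the two evaluations of $f$ (on $\pi_S(\bp)$ versus on $\bp$) and the two operators $V(\cdot)$ and $I(\cdot)$.
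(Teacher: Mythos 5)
Your proof is correct, but it takes a genuinely different route from the paper's. The paper's proof is a short appeal to heavier machinery: it observes that $I\cap k[S]$ is prime as the contraction of a prime ideal, invokes the Closure Theorem \cite[Theorem 3.2.3]{CLO} to get $V(I\cap k[S]) = \overline{\pi_S(V)}$ (whence irreducibility), and then recovers (2) from the Nullstellensatz. You instead prove (2) head-on from the evaluation identity $f(\pi_S(\bp)) = f(\bp)$ for $f\in k[S]$ together with $I(X)=I(\overline{X})$, and then deduce (1) either from primality of the contraction or purely topologically; both deductions are sound. What your argument buys is that it exposes how little is actually needed: because the theorem is stated with $I = I(V)$ rather than with $V = V(P)$ for an arbitrary prime $P$, both conclusions hold over an arbitrary field, with no Nullstellensatz and no Closure Theorem. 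Indeed, your closing remark slightly overstates the role of algebraic closure even in your own argument --- you only use the implication that a nonempty irreducible closed set has prime vanishing ideal, which is field-independent; closure would be needed only for the converse passage from a prime ideal back to a nonempty irreducible variety, a step you never take. The paper's citation is the quicker route given that \cite{CLO} is already in play and it ties the statement to the standard elimination-theoretic toolkit; your version is self-contained and strictly more general.
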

\begin{proof}
Since $V$ is irreducible, $I$ is prime, which implies that $I \cap k[S]$ is also
prime. By  \cite[Theorem 3.2.3]{CLO}, $V(I \cap k[S]) =\overline{\pi_S(V)}$, which
shows that $\overline{\pi_S(V)}$ is irreducible, and (2) follows by the
Nullstellensatz.
\end{proof}
To make this theorem work, taking the Zariski closure of the image was essential. 
For instance, we noted that the set $S =\{a,b,e,f\}$ is independent in Example \ref{ex: matrix}.
However, if $V = V(ae-bd, af-cd, bf-ce)$, the projection $\pi_S: V \to k^4$ cannot be
surjective because a point with $a=1, b=0, e=1, f=1$ cannot come from a rank one
matrix because if $b=0$ the equation $ae-bd=0$ implies that either $a$ or $e$ is zero.

Now we define a geometric analogue of coordinate matroids.
	
\begin{defin}\label{def: basis matroid}
	Let $k$ be an algebraically closed field and let 
	$V\subseteq k^n$ be an irreducible variety.
	Define 
	\[
	\cI_V = \{ S\subseteq E \mid \overline{\pi_S(V)} = k^{|S|}\}.
	\]
\end{defin}
To check that we have defined a matroid, instead of 
verifying the axioms, we will use the relationship 
between projection and elimination to relate $\cI_V$ to 
a coordinate matroid.
\begin{thm}\label{thm: basis matroid}
The set $\cI_V$ from Definition \ref{def: basis matroid} gives the independent sets of
a matroid on $E$. We call this the \emph{basis projection matroid}.
\end{thm}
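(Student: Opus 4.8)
The plan is to avoid checking the matroid axioms directly and instead to identify $\cI_V$ with a coordinate matroid that is already known to be a matroid, exactly as the surrounding text suggests. Since $V$ is an irreducible variety, its vanishing ideal $I = I(V)$ is prime in $k[x_1,\ldots,x_n]$, so Definition \ref{def: coordinate matroid} applies and $(E,\cI_I)$ is a matroid by Theorem \ref{thm: coordinate matroid}. It therefore suffices to prove the set equality $\cI_V = \cI_I$, after which the theorem is immediate.

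To establish this equality I would fix $S \subseteq E$ and push both defining conditions through the closure theorem. By definition $S \in \cI_I$ exactly when $I \cap k[S] = \langle 0\rangle$, while $S \in \cI_V$ exactly when $\overline{\pi_S(V)} = k^{|S|}$. Theorem \ref{thm: closure}(2) supplies the bridge, namely $I(\overline{\pi_S(V)}) = I \cap k[S]$. The remaining observation is that $\overline{\pi_S(V)}$ is a closed set, so it is recovered from its ideal as $V(I(\overline{\pi_S(V)}))$; hence $\overline{\pi_S(V)} = k^{|S|}$ holds if and only if $I(\overline{\pi_S(V)}) = \langle 0\rangle$, using that the only polynomial vanishing on all of $k^{|S|}$ is the zero polynomial.

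The one arithmetic point worth isolating is this last identification $I(k^{|S|}) = \langle 0\rangle$, which requires that $k$ be infinite; this is guaranteed because $k$ is algebraically closed. Chaining the equivalences gives
\[
S \in \cI_V \iff \overline{\pi_S(V)} = k^{|S|} \iff I(\overline{\pi_S(V)}) = \langle 0\rangle \iff I \cap k[S] = \langle 0\rangle \iff S \in \cI_I,
\]
so $\cI_V = \cI_I$ and the pair $(E,\cI_V)$ is a matroid.

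I expect the main obstacle here to be conceptual rather than computational. One must respect that $\cI_V$ is defined through the \emph{Zariski closure} of $\pi_S(V)$ rather than the image itself, which need not be closed, as the $\{a,b,e,f\}$ discussion preceding Definition \ref{def: basis matroid} illustrates; it is precisely taking the closure that makes Theorem \ref{thm: closure} applicable. One must also not overlook that $I(k^{|S|}) = \langle 0\rangle$ genuinely uses the infinitude of $k$ and would fail over a finite field. Beyond these two points, the argument is a direct invocation of Theorem \ref{thm: closure} together with the correspondence between closed sets and their vanishing ideals.
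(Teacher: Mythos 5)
Your proposal is correct and follows essentially the same route as the paper: identify $\cI_V$ with the coordinate matroid $\cI_{I(V)}$ via Theorem \ref{thm: closure} and conclude from Theorem \ref{thm: coordinate matroid}. You simply spell out the middle equivalence (that $\overline{\pi_S(V)} = k^{|S|}$ iff $I(\overline{\pi_S(V)}) = \langle 0\rangle$, using that $k$ is infinite and that a closed set is recovered from its ideal) in more detail than the paper does.
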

\begin{proof}    
Let $P$ be the vanishing ideal of $V$. Since $k$ is closed, the algebra-geometry dictionary tells us that 
$P$ is prime. Hence the coordinate
matroid $(\{1,\ldots,n\},\cI_P)$ is defined.  The set $\cI_V$ is the
same as $\cI_P$, since,
		\[
		S\in \cI_V \Longleftrightarrow \overline{\pi_S(V)} = k^{|S|}
		\overset{\text{Thm. \ref{thm: closure}}}{\Longleftrightarrow}
		P\cap k[S] = \langle 0 \rangle \Longleftrightarrow
		S\in \cI_P.
		\]
Hence $(E,\cI_V)$ is a matroid.
\end{proof}

The advantage of basis projection matroids is that sometimes it is more convenient to think
geometrically. In Example \ref{ex: rigidity}, the fibers of the projection map contain
useful geometric information. For a fixed $G\subseteq \binom{n}{2}$, if $\ell_G$ is the
vector $(\ell_{ij}: ij\in G)$, then the fiber $\pi_G^{-1}(\ell_G)$ tells us about the
achievable distances between pairs of points outside of $G$, a perspective 
emplyoed by Borcea \cite{borcea}, Borcea and Streinu \cite{borceaStreinu}, 
and Sitharam and Gao \cite{meera}.
Similarly, in Example \ref{ex: matrix}, the fibers of the projection map are the 
``completions'' of a low-rank matrix from the observed entries.

\section{Algebraic matroids and field theory.} \label{sec: field}
Classically, algebraic matroids are defined in terms of
field extensions. Let $k$ be a field and $K \supset k$  a field
extension. We say that 
$S = \{\alpha_1, \ldots, \alpha_n\} \in K$ are
\emph{algebraically dependent} over $k$ if there exists a nonzero polynomial $f \in k[x_1,
\ldots, x_n]$ with $f(\alpha_1, \ldots, \alpha_n) =0.$ If no such polynomial exists we
say the elements are \emph{algebraically independent} over $k$.
\begin{defin}\label{def: fieldMatroid}
Let $K \supset k$ be an extension of fields and $E = \{\alpha_1, \ldots, \alpha_n\}$
be a subset of $K \backslash k.$ Without loss of generality, we assume that $K = k(E)$.  We define a matroid $(E, \cI_K)$ with ground set $E$
and define $S \subseteq E$ to be an independent set if $S$ is algebraically independent
over $k$.  
\end{defin}
The classical definition is equivalent to the ones in terms of ideals
and varieties.
If $k \subseteq E = \{\alpha_1, \ldots, \alpha_r\} \subseteq K$, and we define $\varphi:
k[x_1,\ldots, x_r] \to k[\alpha_1, \ldots, \alpha_r]$ by $\varphi(x_i) = \alpha_i,$
then $P = \ker \varphi$.  The independent sets of the coordinate matroid
$(\{x_1,\ldots,x_n\},\cI_{\ker \varphi})$ correspond naturally to independent subsets of 
$(E,\cI_K)$.  Moreover, this construction can be reversed.  If we start with a prime ideal $P$, $k[\{x_1,\ldots, x_n\}]/P$, 
is an integral domain.  Hence its field of fractions $K\supset k$ is defined.
Defining $\{\alpha_1, \ldots, \alpha_n\}$ as the elements of $K$ corresponding 
to the $x_i$ produces an algebraic matroid $(E,\cI_K)$
with independent sets corresponding to those in $(\{x_1,\ldots,x_n\},\cI_P)$.  We saw that there is a natural correspondence between basis projection matroids and coordinate matroids if $k$ is algebraically closed in Theorem \ref{thm: basis matroid}.

We are now in possession of three different-looking, as Rota puts it 
``cryptomorphic'', definitions of an algebraic matroid.  As an illustration 
of why this is useful, we consider the rank, which is an important quantity in almost any 
application.  
The rank of an algebraic matroid  $(E, \cI_K)$ is the transcendence degree of $K$ over $k$.  
Via the correspondences above, we can also see that this gives the rank for the associated coordinate and basis projection matroids.  
Does the rank of a coordinate matroid $(E,\cI_P)$  or a basis projection matroid $(E,\cI_V)$ have any meaning?  The 
answer is yes, they are the dimensions of $k[E]/P$ and $V$, respectively.  This is difficult to see 
directly, even with the (somewhat technical, see, \cite[Chapter 8]{eisenbud})
definitions of dimension for ideals and varieties.  However, both quantities are known to be 
equal to the transcendence degree of the extension we constructed to go between $(E,\cI_P)$ and 
$(E,\cI_K)$. 

In the case of field extensions the theory follows from work of van der Waerden who showed that ``[t]he algebraic dependence relation has the following
fundamental properties which are completely analogous to the fundamental properties of
linear dependence,'' \cite[Ch. VIII, S. 64]{vdW}. The
connection was also known to Mac Lane, who wrote about lattices of subfields in
\cite{maclane} and drew attention to ``connection[s] to the matroids of Whitney'' and
the ``lattices by Birkhoff.''
	
It seems that algebraic matroids were largely forgotten after Mac Lane until the work
of Ingleton in the 1970s.  Ingleton asked the basic question for algebraic matroids 
that Whitney had already considered in the 1903's for linear representability: \textit{is 
every matroid realizable as an algebraic matroid?} This was answered by Ingleton and
Main \cite{ingletonMain} in the negative who showed that the V\'{a}mos matroid,
displayed in Figure \ref{fig: vamos}, is not algebraic.

\begin{figure}[h]
    \centering
    \includegraphics[width=0.20\textwidth]{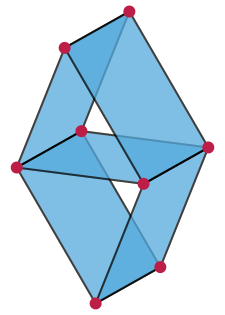}
    \caption{The Vamos matroid.  This is a rank $4$ matroid, shown according
    to the convention that all sets of size at most $4$ are independent, except 
    for the size $4$ sets indicated by shaded quadrilaterals.  
    Picture from \cite{epsVamosPicture}.}\label{fig: vamos}
\end{figure}

What about the relationship between algebraic and linear matroids?
In characteristic zero, the two classes are the same.
\begin{thm}[Ingleton \cite{ingleton}] \label{thm: linear}
If a matroid is realizable as an algebraic matroid over a field $k$ of
characteristic zero, then it is also realizable as a linear matroid over $k$.
\end{thm}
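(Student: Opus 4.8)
The plan is to linearize algebraic dependence using differentials, exploiting the fact that in characteristic zero every field extension is separable. By the equivalence of the three descriptions in Section~\ref{sec: field}, I may assume the matroid is presented as a field matroid $(E,\cI_K)$ with $E=\{\alpha_1,\ldots,\alpha_n\}$ and $K=k(\alpha_1,\ldots,\alpha_n)$. Let $\Omega_{K/k}$ be the $K$-vector space of K\"ahler differentials and $d\colon K\to\Omega_{K/k}$ the universal derivation; concretely, after choosing a transcendence basis $t_1,\ldots,t_r$ of $K$ over $k$, one may instead use the $k$-derivations $\partial/\partial t_1,\ldots,\partial/\partial t_r$ and record each $\alpha_i$ by its Jacobian row $(\partial\alpha_i/\partial t_1,\ldots,\partial\alpha_i/\partial t_r)\in K^r$. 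Either way I obtain vectors $\bv_1,\ldots,\bv_n$ in an $r$-dimensional $K$-vector space, where $r=\tdeg_k K$ is the rank of the matroid.

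The crux is a \emph{linearization lemma}: for every $S\subseteq E$, the set $\{\alpha_i : i\in S\}$ is algebraically independent over $k$ if and only if $\{d\alpha_i : i\in S\}$ is linearly independent over $K$. This is exactly where characteristic zero enters: because $K/k$ is separably generated, a separating transcendence basis is simultaneously an algebraic transcendence basis and induces a $K$-basis of $\Omega_{K/k}$, so $\dim_K\Omega_{K/k}=\tdeg_k K$ and, more locally, $\rank_K\{d\alpha_i : i\in S\}=\tdeg_k k(\alpha_i : i\in S)$. In positive characteristic this can fail (if $\alpha=\beta^p$ then $d\alpha=0$ while $\alpha$ is transcendental), which is precisely why the hypothesis cannot be dropped. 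Granting the lemma, the vectors $\bv_1,\ldots,\bv_n$ realize $(E,\cI_K)$ as a linear matroid \emph{over the function field $K$}: a set is independent in the matroid exactly when the corresponding columns are linearly independent over $K$.

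What remains, and what I expect to be the main obstacle, is to descend the representation from $K$ down to $k$ itself. After normalizing so that a chosen basis $\{1,\ldots,r\}$ of the matroid gives $\bv_i=e_i$ for $i\le r$, the remaining entries are rational (indeed algebraic) functions of the transcendence basis $t_1,\ldots,t_r$, and a size-$r$ set $B$ is a basis precisely when the maximal minor $\det[\bv_i : i\in B]$ is a nonzero element of $K$. Since there are only finitely many such minors and $k$ is infinite (it contains $\Q$), the plan is to specialize $t_1,\ldots,t_r$ to values in $k$ avoiding the vanishing loci of the finitely many basis-minors: every dependency among columns is preserved automatically, while a generic choice keeps each basis a basis. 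The delicate point is that the entries involve elements algebraic over $k(t_1,\ldots,t_r)$, so a naive specialization lands the matrix in a finite extension of $k$ rather than in $k$; controlling this descent, so that no basis minor is accidentally killed \emph{and} the entries can be taken in $k$, is where the real work lies and where the infinitude of $k$ and, once more, the separability afforded by characteristic zero are essential.
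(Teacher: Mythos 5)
You should know at the outset that the paper does not prove Theorem~\ref{thm: linear}; it is quoted from Ingleton, so there is no in-text argument to measure yours against. Your first two paragraphs are in fact the standard core of Ingleton's proof: in characteristic zero $K/k$ is separably generated, so $\dim_K\Omega_{K/k}=\tdeg_k K$ and more precisely $\rank_K\{d\alpha_i : i\in S\}=\tdeg_k k(\alpha_i : i\in S)$ for every $S$, whence $\alpha_i\mapsto d\alpha_i$ realizes $(E,\cI_K)$ as a linear matroid over $K$. That part is correct, and it is exactly the right place to spend the characteristic-zero hypothesis.

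The gap is the descent step, and it is not merely ``where the real work lies'' --- it cannot be carried out in general. Generic specialization of $t_1,\ldots,t_r$ to elements of $k$ (using that $k\supseteq\Q$ is infinite) only lands the matrix in a finite extension $k'$ of $k$, because the entries are algebraic over $k(t_1,\ldots,t_r)$, and linear representability over $k'$ does not imply linear representability over $k$. Indeed the statement you are trying to reach is stronger than what is true: if a matroid is linear over a finite extension $k'$ of $k$, then it is algebraic over $k'$ and hence algebraic over $k$ (algebraic independence over $k$ and over $k'$ coincide for subsets of an extension of $k'$, since $k'/k$ is algebraic and transcendence degrees are unchanged), yet it need not be linear over $k$. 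A concrete witness is the rank-$3$ matroid of the M\"obius--Kantor configuration $8_3$ discussed by Mac Lane: it is linear over $\Q(\omega)$ for $\omega$ a primitive cube root of unity, hence algebraic over $\Q$, but it is not representable over $\R$, let alone over $\Q$. So the conclusion your argument actually delivers --- and the form in which Ingleton's theorem should be read and is used later in the paper (a matroid algebraic over a field of characteristic zero is linear over \emph{some} field of characteristic zero, e.g., over $K$ itself) --- is representability over an extension of $k$, not over $k$. Stop after your second paragraph and assert the representation over $K$; the third paragraph is chasing a strengthening that fails.
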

What about fields of positive characteristic? 
Whitney's example of a matroid that is not linearly representable over $\CC$ but is 
over $\mathbb{F}_2$ shows that the characteristic of the underlying field matters.
The characteristic of the field also makes a big difference in determining algebraic representability.
In a series of papers in the 1980s
 Bernt Lindstr\"{o}m \cite{Lindstrom1,Lindstrom2,Lindstrom3,Lindstrom4} demonstrated
that there are infinitely many algebraic matroids representable over \emph{every}
characteristic besides zero, and not linearly representable over 
\emph{any} field.  Characterizing which matroids are algebraic (in positive characteristic) 
is an active area of research, including the recent advances of Bollen, Draisma and
Pendavingh \cite{bollen} (see also Cartwight \cite{cartwright}).

\section{Applications.}\label{sec: applications}
We revisit the earlier examples, including matrix completion, rigidity theory, and
graphical matroids, from the point of view of algebraic matroids, highlighting the
connections revealed by the common language.
    
\subsection{A matrix, an ideal, and a variety}\label{sec: matrix, ideal, variety}
An $m \times n$ matrix $A = (\ba_1\, \cdots\, \ba_n)$ with $\ba_i \in \Z^m$ gives rise to
a matroid that can be realized as a linear matroid and a coordinate matroid in a
natural way via the construction of the \emph{toric variety} $X_A$ associated to $A$.
(Of course, once we have the coordinate matroid we also have the basis matroid and the
algebraic matroid of the field of fractions of the coordinate ring of $X_A.$)
    
From the data of $A$ we get a map $\varphi_A:(\CC^*)^m \to \CC^n$ given by
$\varphi_A(\bt) = (\bt^{\ba_1}, \ldots, \bt^{\ba_n}),$ where $\bt^{\ba}=t_1^{a_1}
\cdots t_m^{a_m}$. As shown in \cite{sturmfels}, the variety $X_A$ defined to be the
Zariski closure of the image of $\varphi_A$ has ideal $I_A = \langle \bx^{\bu} -
\bx^{\bv} \mid \bu, \bv \in \Z_{\geq0}, \bu-\bv \in \ker A \rangle.$ Since elements of
$\ker A$ are dependence relations on the columns of $A$, we see that the linear
matroid on the columns of $A$ is the same as the algebraic matroid defined by the
ideal $I_A.$ The variety $X_A$ is called a toric variety as it contains the torus
$(\CC^*)^m$ as a dense open subset.
    
Returning to Example \ref{ex: vec config}, we see that the columns of 
     \[
      A=  \begin{pmatrix}
        1 & 1 & 1 & 0 & 0 & 0\\
        0 & 0 & 0 & 1 & 1 & 1\\
        1 & 0 & 0 & 1 & 0 & 0\\
        0 & 1 & 0 & 0 & 1 & 0\\
        0 & 0 & 1 & 0 & 0 & 1
        \end{pmatrix}
    \]
define a parameterization $\varphi_A(\bt) = (t_1t_3, t_1t_4, t_1t_5, t_2t_3, t_2t_4,
t_3t_5).$ If we give the target space coordinates $a,\ldots, f,$ then
$(1,0,0,1,0)-(0,1,0,1,0,0) \in \ker A$, and this tells us that the polynomial $ae-bd$ is in
$I_A.$ (Indeed, if we let $\psi = ae-bd,$ then $\psi(\varphi(\bt)) =
(t_1t_3)(t_2t_4)-(t_1t_4)(t_2t_3)=0.$)
    
So, the linear dependence relations on the columns of $A$ give algebraic dependence
relations on $a, \ldots, f.$ This is true for any general toric variety $X_A$ that
arises from an integer matrix $A$ in this way. For more detail on how the circuits of
the matroid on the columns of $A$ are related to the ideal $I_A$, see
\cite[chapter 4]{sturmfels}. We will soon see that  the matrix
in Example \ref{ex: vec config} has a special form that provides 
a connection to the rank one matrix completion problem.
    
\subsection{Matrix completion, varieties, and bipartite graphs} \label{sec: matrix completion}
Algebraic matroids were used to study the matrix completion problem by Kir\'aly, Theran, and Tomioka
\cite{kiralyTheran}. We now provide a brief introduction.
    
Define $I_{ m\times n,r}$ to be the ideal generated by the $(r+1) \times (r+1)$ minors
of the generic matrix $M = (\bx_1\, \cdots\, \bx_n)$ where $\bx_i$ is a column vector of $m$
indeterminates. This ideal is prime, so defines an algebraic matroid, $\cM_{I_{m\times
n,r}} = (\{(1,1),\ldots,(m,n)\},\cI_{I_{m\times n,r}})$. This is the matroid on the
entries of a general $m\times n$ matrix of rank $r.$
    
\begin{thm}
    The rank of $\cM_{I_{m\times n,r}}$ is $r(m+n-r)$. 
\end{thm}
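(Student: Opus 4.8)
The plan is to reduce the computation of the matroid rank to a dimension count for a determinantal variety, and then to carry out that count via an explicit parametrization by rank factorizations. First I would invoke the correspondences of Section~\ref{sec: field}: the rank of the algebraic matroid attached to the prime ideal $I_{m\times n,r}$ is the transcendence degree of the fraction field of $k[x_{ij}]/I_{m\times n,r}$, and this equals the dimension of $k[x_{ij}]/I_{m\times n,r}$. Taking $k = \CC$, this is in turn the dimension of the variety $V = V(I_{m\times n,r})\subseteq k^{mn}$ of $m\times n$ matrices of rank at most $r$. So it suffices to prove that $\dim V = r(m+n-r)$.

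To compute $\dim V$, I would introduce the morphism
\[
\Phi: k^{m\times r}\times k^{r\times n}\to k^{m\times n}, \qquad \Phi(A,B) = AB.
\]
Since $\rank(AB)\le r$ always, and conversely every matrix of rank at most $r$ factors as a product of an $m\times r$ matrix and an $r\times n$ matrix (pad a minimal factorization with zeros), the image of $\Phi$ is exactly the point set of $V$. The domain is affine space, hence irreducible of dimension $mr+rn$, and $V$ is irreducible because $I_{m\times n,r}$ is prime; thus $\Phi$ is a dominant morphism of irreducible varieties.

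Next I would apply the theorem on the dimension of the fibers of a dominant morphism (see, e.g., \cite{eisenbud}): for generic $M$ in the image, $\dim\Phi^{-1}(M) = (mr+rn) - \dim V$. A generic point of $V$ is a matrix $M$ of rank exactly $r$, so I would compute the fiber there. If $M = AB = A'B'$ with $A,A'$ of rank $r$ and $B,B'$ of rank $r$, then $A$ and $A'$ both have column space equal to that of $M$, so $A' = AG$ for a unique $G\in \mathrm{GL}_r$; since $A$ has a left inverse, $AB = AGB'$ forces $B' = G^{-1}B$. Conversely every $(AG, G^{-1}B)$ lies in the fiber, so the fiber over $M$ is a single $\mathrm{GL}_r$-orbit of dimension $r^2$. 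Combining, $\dim V = (mr+rn) - r^2 = r(m+n-r)$, as claimed.

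The main obstacle is the fiber computation: one must verify that the factorization $M = AB$ of a rank-$r$ matrix is unique up to the $\mathrm{GL}_r$ action, so that the generic fiber has dimension exactly $r^2$. This is the step where the hypothesis that $M$ has full rank $r$ (rather than smaller rank) is essential; on the lower-rank strata the fibers jump in dimension, which is exactly why one must restrict to a generic point of $V$ when applying the fiber-dimension theorem.
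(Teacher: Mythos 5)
Your argument is correct, and it is genuinely more than what the paper itself provides: the paper's proof is only a sketch, resting on the heuristic that one may freely specify the entries in the first $r$ rows and first $r$ columns (that is, $rm+rn-r^2$ entries) after which the remaining entries are generically determined. You instead make the dimension count rigorous via the rank factorization $\Phi(A,B)=AB$ together with the fiber-dimension theorem for dominant morphisms, and your identification of the generic fiber as a single $\mathrm{GL}_r$-orbit is carried out correctly: over a matrix $M$ of rank exactly $r$ both factors are forced to have full rank $r$, which is what pins the fiber down to dimension $r^2$ and justifies restricting to the open stratum of rank exactly $r$. The trade-off between the two approaches is worth noting. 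The paper's heuristic has the virtue of speaking directly in the language of the matroid --- the union of the first $r$ rows and columns of the generic matrix is in fact a basis of $\cM_{I_{m\times n,r}}$, so the count $rm+rn-r^2$ is literally the size of a basis --- but turning it into a proof requires showing that those entries really are algebraically independent modulo $I_{m\times n,r}$ and that they determine the rest, which the paper does not do. Your parametrization argument requires importing the fiber-dimension theorem from algebraic geometry, but in exchange it yields a complete proof and, as a byproduct, the irreducibility and the unirationality of the determinantal variety. One small point of bookkeeping: you take $k=\CC$ to invoke the variety--ideal correspondence, which is harmless since the transcendence degree of the fraction field of $k[x_{ij}]/I_{m\times n,r}$ is insensitive to the (infinite) base field, but it is worth saying explicitly that the matroid rank is defined algebraically and only the geometric computation needs $k$ algebraically closed.
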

\begin{proof}[Proof sketch]
The dimension of the variety $V_{m\times n,r}$ of $m \times n$ matrices of rank 
at most $r$ is $r(m+n-r)$. One intuition for this, which isn't far from a proof, is that you
can specify the first $r$ rows and columns of the matrix freely and then rest of the
matrix is determined. This process sets $rm + rn -r^2$ entries in total. 
\end{proof} 
   
Why are the elements of $I_{2\times 3,1}$ the same as the polynomials that vanish on
the toric variety $X_A$ discussed above? Observe that the
coordinates of $\varphi_A(\bt) = (t_1t_3, t_1t_4, t_1t_5, t_2t_3, t_2t_4, t_3t_5)$ 
can be rearranged into a matrix:
   \[
   \begin{pmatrix}
   t_1\\
   t_2
   \end{pmatrix}
   \begin{pmatrix}
   t_3 & t_4 & t_5
   \end{pmatrix}
   =
   \begin{pmatrix}
   t_1t_3 & t_1t_4 & t_1t_5\\
  t_2t_3 & t_2t_4 & t_2t_5
   \end{pmatrix}.
   \]
Replacing each product with a distinct variable, we have the matrix of
indeterminates from Example \ref{ex: matrix}:
   \[
   M = \begin{pmatrix}
   a & b & c\\
   d & e & f
   \end{pmatrix}.
   \]
The $2 \times 2$ minors of $M$ are polynomials that vanish on the multiplication 
table by commutativity and associativity:
   \[
   ae-bd = (t_1t_3)(t_2t_4)-(t_1t_4)(t_2t_3) = t_1t_2t_3t_4-t_1t_2t_3t_4=0.
   \] 
   
More generally, any $m \times n$ matrix with distinct variables as entries can be
interpreted as the formal multiplication table of sets of size $m$ and $n$,
respectively. The $2 \times 2$ minors will vanish on the variety parameterized by
these products, the classical \emph{Segre variety} $\mathbb{P}^{m-1} \times
\mathbb{P}^{n-1}.$
   
The combinatorics of the circuits in Example \ref{ex: matrix} can also be encoded in
the bipartite graph $K_{2,3}$ with vertices labeled $t_1,\ldots, t_5$ so that each
edge corresponds to a product $t_it_j$, as shown in Figure \ref{fig: tijgraph}.
Each 4-cycle in this graph corresponds to a $2 \times 2$ minor, and these are exactly
the circuits of the matroid.
The maximal independent sets are 
\[\begin{array}{llllll}
    \{a,b,c,d\}, & \{a,b,c,e\}, & \{a,b,c,f\}, &
    \{a,d,e,f\}, & \{b,d,e,f\}, & \{c,d,e,f\}, \\
    \{a,b,e,f\}, & \{a,c,f,e\}, & \{a,b,d,f\}, &
    \{b,c,d,f\}, & \{a,c,d,e\}, & \text{ and } \{b,c,d,e\}.
    \end{array}
\]
Given (generic) values for the entries in any of these sets there is a unique matrix completion,
because the circuit polynomials are all linear in the missing entry.  
    \begin{figure}[h!]
			\[
			\begin{tikzpicture}[scale=1.5]
			\node (1) at (0,0) [left]{$t_4$};
			\node (2) at (-1,0)[left] {$t_3$};
			\node (3) at (1,0) [left]{$t_5$};
			\node (4) at (-.5,1) [left]{$t_1$};
			\node (5) at (.5,1) [right]{$t_2$};
			\filldraw (0,0) circle (2pt);
			\filldraw (-1,0) circle (2pt);
			\filldraw (1,0) circle (2pt);
			\filldraw (-.5,1) circle (2pt);
			\filldraw (.5,1) circle (2pt);
			\draw (-1,0) -- (-.5,1) -- (0,0) -- (.5,1) -- (1,0) ;
			\draw (-.5,1) -- (1,0);
			\draw (.5,1) -- (-1,0);
			\end{tikzpicture}
			\]
			\caption{\label{fig: tijgraph} $K_{2,3}$.}
    \end{figure}

\subsection{Distance geometry and rigidity theory}
Given $n$ points $\bx_1, \ldots, \bx_n \in \RR^d,$ there are $\binom{n}{2}$ equations
$(\bx_i-\bx_j)\cdot (\bx_i-\bx_j) = \ell_{ij}$ giving the squared distances between
pairs of points. The (closure of) the image of the  \textit{squared length map}
$(\bx_1,\ldots, \bx_n) \mapsto ((\bx_i-\bx_j)\cdot (\bx_i-\bx_j))$ is a variety
$\CM_{d,n}$ in $\RR^{\binom{n}{2}}$ with defining ideal $I_{d,n}$ given by the $(d+1)
\times (d+1)$ minors of the $(n-1)\times (n-1)$ Gram matrix $M_{d,n}$ with $ij$ entry
equal to
	\[
	\begin{cases}
	 2\ell_{in} &  \text{if $i = j$}\\
	 \ell_{in}+\ell_{jn}-\ell_{ij} & \text{if $i \neq j$}
	\end{cases}.
  \]

It follows from work of Whiteley \cite{coning} and Saliola and Whiteley
\cite{saliolaWhiteley}, that $I_{n,d}$ has a matroid isomorphic to the one associated
with the ideal of the $(d+2)\times (d+2)$ minors of a generic symmetric $n\times n$
matrix, modulo its diagonal. This was independently rediscovered by
Gross and Sullivant \cite{grossSullivant}.
    
It is interesting to ask which interpoint distances are needed in order to determine
the rest, generically. This is related to the central question in the theory of the rigidity of bar
and joint frameworks. To formalize this, we may fix a graph $G$ on $n$ vertices and
think of the edges as fixed-length bars and the vertices as universal joints. A
realization of $G$ in $\RR^d$ is a bar-and-joint framework. A graph $G$ has a flexible
realization if the fiber of $\pi_G: \CM_{d,n} \to \RR^{|G|}$ has positive dimension.
    
If $d = 2$, i.e. we are examining bar-and-joint frameworks in the plane, then the rank
of the matroid is $2n-3.$ When $n=4,$ the rigidity matroid is the uniform matroid of
rank 5 on 6 elements as the deletion of any edge of $K_4$ gives a basis. Thus,
quadrilateral, or 4-bar framework, on these joints is a flexible bar-and-joint
framework. The edges form an independent set but not a maximal independent set. Hence,
there are infinitely many possibilities for $\ell_{24}$ in Figure \ref{fig: 4bar}.
However, a braced quadrilateral is a basis of the rigidity matroid. This implies that
the framework is rigid; indeed, there are only two possibilities for $\ell_{24}$ in
Figure \ref{fig: 4barBrace}.
  
	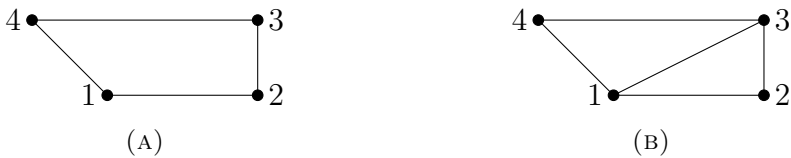
\begin{figure}[ht]
	\centering
		\subfloat[][]{
	\begin{tikzpicture}
	\node (1) at (0,0) [left]{1};
	\node (2) at (2,0)[right] {2};
	\node (3) at (2,1) [right]{3};
	\node (4) at (-1,1) [left]{4};
	\filldraw (0,0) circle (2pt);
	\filldraw (2,0) circle (2pt);
	\filldraw (2,1) circle (2pt);
	\filldraw (-1,1) circle (2pt);
	\draw (0,0) -- (2,0) -- (2,1) -- (-1,1) -- (0,0);
 	\end{tikzpicture}
	\label{fig: 4bar}}
	\qquad\qquad\qquad
    \subfloat[][]{
    \begin{tikzpicture}
	\node (1) at (0,0) [left]{1};
	\node (2) at (2,0)[right] {2};
	\node (3) at (2,1) [right]{3};
	\node (4) at (-1,1) [left]{4};
	\filldraw (0,0) circle (2pt);
	\filldraw (2,0) circle (2pt);
	\filldraw (2,1) circle (2pt);
	\filldraw (-1,1) circle (2pt);
	\draw (0,0) -- (2,0) -- (2,1) -- (-1,1) -- (0,0);
	\draw (0,0) -- (2,1);
 	\end{tikzpicture}
    \label{fig: 4barBrace}}	
    \caption{Examples of frameworks: (A) $4$-bar framework; (B) braced 
    $4$-bar framework.}
\end{figure}
The rigidity matroid has a unique circuit in this case, given by the determinant of 
\[
M_4 = \begin{pmatrix}
2\ell_{14} & \ell_{14}+\ell_{24}-\ell_{12} & \ell_{14}+\ell_{34}-\ell_{13}\\
\ell_{14}+\ell_{24}-\ell_{12} & 2\ell_{24} & \ell_{24}+\ell_{34}-\ell_{23}\\
 \ell_{14}+\ell_{34}-\ell_{13} & \ell_{24}+\ell_{34}-\ell_{23} & 2\ell_{34}
\end{pmatrix},
\]
which has degree two in each variable. This implies that there are two possible
realizations (over $\CC$, counting with multiplicity) for any choice of valid edge
lengths for a basis graph.
    
When $n = 5$ we have a matroid of rank 7 on 10 elements.  There are three bases 
(up to relabeling) corresponding to the graphs in Figure \ref{fig: bases25}.
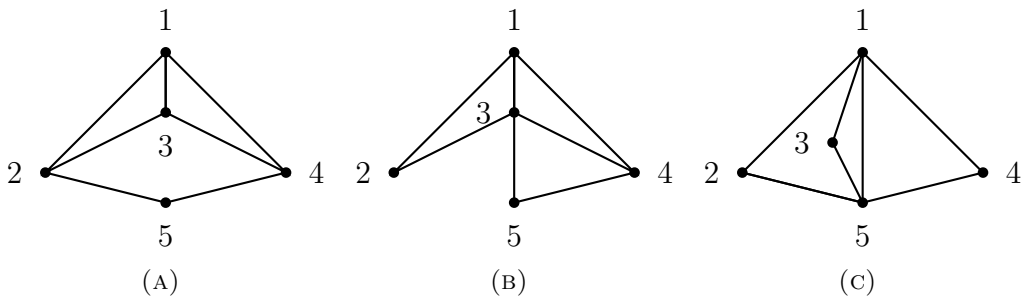
\begin{figure}[h]
  \centering
\subfloat[][]{
\begin{tikzpicture}[style=thick, scale=.8]
\node (1) at (0, 2) [label=above:1]{};
\node (2) at (-2,0) [label=left:2]{};
\node (3) at (0, 1) [label=below:3]{};
\node (4) at (2,0) [label=right:4]{};
\node (5) at (0, -.5) [label=below:5]{};

\filldraw [black] (1) circle (2pt) 
(2) circle (2pt) 
(3) circle (2pt)
(4) circle (2pt)
(5) circle (2pt);

\draw (0,2) -- (0,1) -- (-2,0) --cycle;
\draw (0,2) -- (0,1) -- (2,0) --cycle;
\draw (-2,0) -- (0,-.5) -- (2,0);

\end{tikzpicture}}
\subfloat[][]{
\begin{tikzpicture}[style=thick, scale=.8]
\node (1) at (0, 2) [label=above:1]{};
\node (2) at (-2,0) [label=left:2]{};
\node (3) at (0, 1) [label=left:3]{};
\node (4) at (2,0) [label=right:4]{};
\node (5) at (0, -.5) [label=below:5]{};

\filldraw [black] (1) circle (2pt) 
(2) circle (2pt) 
(3) circle (2pt)
(4) circle (2pt)
(5) circle (2pt);

\draw (0,2) -- (0,1) -- (-2,0) --cycle;
\draw (0,2) -- (0,1) -- (2,0) --cycle;
\draw (0,-.5) -- (2,0);
\draw (0,1) -- (0,-.5);

\end{tikzpicture}}
\subfloat[][]{
\begin{tikzpicture}[style=thick, scale=.8]
\node (1) at (0, 2) [label=above:1]{};
\node (2) at (-2,0) [label=left:2]{};
\node (3) at (-.5, .5) [label=left:3]{};
\node (4) at (2,0) [label=right:4]{};
\node (5) at (0, -.5) [label=below:5]{};

\filldraw [black] (1) circle (2pt) 
(2) circle (2pt) 
(3) circle (2pt)
(4) circle (2pt)
(5) circle (2pt);

\draw (0,2) -- (0,-.5) -- (-2,0) --cycle;
\draw (0,2) --(2,0);
\draw (-2,0) -- (0,-.5) -- (2,0);
\draw (0,2) -- (-.5,.5) -- (0,-.5);
\end{tikzpicture}}
\caption{The three bases of the rigidity matroid when $d=2$ and $n=5.$}
\label{fig: bases25}
\end{figure}
\noindent 
Adding an edge to any of these graphs creates a circuit. 

What about the bases for 
arbitrary $n$ and $d$?  
We can derive a necessary condition using an idea of Maxwell \cite{maxwell}.
The dimension of $\CM_{d,n}$ is 
$dn - \binom{d+1}{2}$ (see \cite{borcea}), 
so no independent set $G$ in the algebraic matroid 
$(K_n,\cI_{\CM_{d,n}})$ can contain more than this many edges, since 
the dimension of $\pi_G$ is bounded by that of $\CM_{d,n}$.  The same 
argument applies to any induced subgraph of $K_n$,
since the projection of $\CM_{d,n}$ onto a 
smaller $K_{n'}$ is $\CM_{d,n'}$,so any basis graph must 
have $dn - \binom{d+1}{2}$ edges 
and no induced subgraph on $n'$ vertices with 
more than $\max\{0,dn' - \binom{d+1}{2}\}$ edges.  
Such a graph
is called $\left(d,\binom{d+1}{2}\right)$-tight.  

The following theorem is usually attributed to Laman \cite{laman}, but see 
also Pollaczek-Geiringer \cite{hilda}\footnote{Jan Peter Schäfermeyer brought Pollaczek-Geiringer's
work to the attention of the framework rigidity community in 2017.}.
\begin{thm}[Laman's Theorem]\label{thm: laman}
For all $n\ge 2$, the bases of the rigidity matroid $(K_n,\cI_{\CM_{2,n}})$
are the $(2,3)$-tight graphs.
\end{thm}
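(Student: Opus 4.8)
The plan is to prove the two inclusions separately. The Maxwell-style counting argument given just above already shows that every basis graph is $\left(2,\binom{3}{2}\right)$-tight, i.e. $(2,3)$-tight, so the forward direction is in hand; the substance of the theorem is the converse, that every $(2,3)$-tight graph $G$ is \emph{independent} in $(K_n,\cI_{\CM_{2,n}})$. Since such a $G$ has exactly $2n-3$ edges and $2n-3$ is the rank of the matroid, independence immediately upgrades it to a basis. So I would reduce everything to the single claim: a $(2,3)$-tight graph is independent.

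First I would replace the algebraic matroid by a concrete linear one. Since $\CM_{2,n}$ lives over a field of characteristic zero, Theorem \ref{thm: linear} guarantees that $(K_n,\cI_{\CM_{2,n}})$ is a linear matroid, and the standard Jacobian criterion for algebraic independence identifies an explicit representation: the rows of the Jacobian of the squared-length parameterization at a generic configuration $\mathbf{p}=(\mathbf{p}_1,\ldots,\mathbf{p}_n)\in(\RR^2)^n$. The row indexed by an edge $ij$ is the classical rigidity-matrix row, supported on the coordinate blocks of $\mathbf{p}_i$ and $\mathbf{p}_j$ with entries $\pm(\mathbf{p}_i-\mathbf{p}_j)$. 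The link between $\dim\overline{\pi_G(\CM_{2,n})}$ and the rank of these rows comes from Theorem \ref{thm: closure} together with generic smoothness (passing to the algebraic closure does not change the generic rank). Thus a set $G$ of edges is independent in $\cI_{\CM_{2,n}}$ precisely when the corresponding rigidity-matrix rows are linearly independent for generic $\mathbf{p}$, and the problem becomes a statement about when a combinatorially structured matrix has full row rank.

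Next I would set up an induction on $n$ via Henneberg moves. The combinatorial input is that every $(2,3)$-tight graph on $n\ge 3$ vertices reduces to a $(2,3)$-tight graph on $n-1$ vertices by one of two inverse operations: deleting a degree-$2$ vertex, or deleting a degree-$3$ vertex $v$ and adding back one edge among its three neighbors. A counting argument (the average degree is $(4n-6)/n<4$, and no tight graph has a vertex of degree $0$ or $1$) forces the minimum degree to be $2$ or $3$, so one of the two reductions is always available; for the degree-$3$ case one must check that at least one of the three candidate edges keeps the graph sparse, which follows from submodularity of the sparsity count function. On the geometric side I would then verify that the two corresponding forward moves preserve row-independence of the rigidity matrix. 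The degree-$2$ (vertex-addition) move is straightforward: the two new rows involve the fresh, generic coordinates of the added vertex and hence cannot lie in the span of the old rows, so independence is preserved.

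The main obstacle is the degree-$3$ (edge-split, or ``$1$-extension'') move: one deletes an edge $ab$, introduces a new vertex $w$ joined to $a$, $b$, and a third vertex $c$, and must show that independence of the old edge set (which contained $ab$) implies independence of the new one. The clean argument is a degeneration. Place $w$ on the line through $\mathbf{p}_a$ and $\mathbf{p}_b$, keeping the other points generic; then $\mathbf{p}_w-\mathbf{p}_a$ and $\mathbf{p}_w-\mathbf{p}_b$ are parallel, so a suitable combination of the rows for $wa$ and $wb$ cancels the $w$-block and produces a row supported only on the blocks of $a$ and $b$ that is proportional to the deleted row for $ab$. This recovers the $ab$ row and reduces the rank computation to the independence of the old configuration, while the component of $w$ transverse to the line supplies the extra rank needed for the $wc$ row. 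Exhibiting full row rank at this special point forces it at a generic point by lower semicontinuity of rank. Tracking exactly how the $ab$ row is recovered is the delicate computation, and it is precisely here that $d=2$ and $\binom{d+1}{2}=3$ enter. With both moves shown to preserve independence, the induction closes from the base case of a single edge, and every $(2,3)$-tight graph, being Henneberg-constructible, is independent and hence a basis.
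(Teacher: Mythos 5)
The paper does not actually prove this theorem: it derives only the easy direction (every basis must be $(2,3)$-tight, via Maxwell's count applied to $\dim \CM_{2,n} = 2n-3$ and to the induced subgraphs) and then cites Laman and Pollaczek-Geiringer for the converse. Your proposal supplies the missing converse by the classical Henneberg-move argument, and the outline is correct: pass to the generic rigidity matrix (the Jacobian of the squared-length map, legitimate in characteristic zero), reduce a $(2,3)$-tight graph by deleting a vertex of degree $2$ or $3$, and show that vertex addition and edge splitting (the $1$-extension) preserve generic row independence, the latter via the collinear degeneration and lower semicontinuity of rank. This is the standard route and it does close the argument, so relative to the paper you are proving strictly more than what is on the page. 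Be aware, though, of where the real work sits in your sketch: (i) the identification of $\cI_{\CM_{2,n}}$ with the linear matroid of the rigidity matrix needs the generic-Jacobian criterion for transcendence degree, which the paper never develops and which is not a consequence of Ingleton's Theorem \ref{thm: linear} (that result asserts abstract linear representability over \emph{some} matrix, not over the Jacobian); (ii) the combinatorial lemma that a degree-$3$ vertex admits at least one admissible replacement edge among its three neighbor pairs is the most delicate step and deserves the full argument (a non-edge $xy$ is inadmissible only if $x,y$ lie in a common tight subgraph, and the union of tight subgraphs meeting in at least two vertices is tight, so three inadmissible pairs force a subgraph of $G$ violating sparsity); and (iii) in the degeneration one must verify that the recovered row is a \emph{nonzero} multiple of the deleted $ab$ row and that the $wc$ row contributes independent rank because $p_c$ is off the line. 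None of these is a gap in the sense of a wrong idea, but each is a place where a referee would ask you to write out the computation.
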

Aside from dimension one, which is folklore (the bases are spanning trees of $K_n$),
and $n\le d+2$, which gives a uniform matroid, there is no
known analogue of Laman's Theorem in higher dimensions.  Finding one
is a major open problem in rigidity theory.  
In dimensions $d\ge 3$ Maxwell's heuristic no longer 
rules out all the circuits in the rigidity matroid.  An interesting class 
of examples was constructed by Bolker and Roth \cite{bolker}.  
They showed that, for $d\ge 3$, $K_{d+2,d+2}$ is a circuit in the 
rigidity matroid with $2(d+2)$ vertices and $(d+2)^2$ edges.  Since 
\[dn-\binom{d+1}{2} - (d+2)^2 = 2d(d+2) - \binom{d+1}{2} - (d+2)^2 = \frac{1}{2} \left(d^2-d-8\right) >0\]
when $d\ge 4$, Maxwell's heuristic fails on $K_{d+2,d+2}$ for $d=4$ and becomes
less effective as $d$ increases.

\section{Final thoughts.}
As we have seen, the perspective of matroid theory reveals a beautiful interplay among objects that are
connected in spirit if different
in origin.  Furthermore, there is much yet to explore on both the computational and theoretical sides.

A type of question that is particularly relevant in applications is computational in nature.
We don't know a general method other than elimination to 
compute circuit polynomials.  
As an example, the circuit polynomial of $K_{3,4}$ in the $2$-dimensional
rigidity matroid seems out of reach to naive implementation in current computer 
algebra systems, despite having a simple geometric description, by White and Whiteley \cite{WW} 
in the coordinates of the joints.  To this end, Rosen \cite{Rosen}, has developed software that combines 
linear algebra and numerical algebraic geometry to speed up computation in algebraic 
matroids that have additional geometric information.

Additionally, a number of basic structural questions about algebraic matroids remain unresolved.  
Strikingly, it is not even known if the class of algebraic matroids is closed under duality (see \cite[Section 6.7]{oxley}).  
Enumerative results are also largely unavailable.  Nelson's recent breakthrough \cite{nelson} 
shows that almost all matroids are not linear, which in light of Ingleton's 
Theorem \ref{thm: linear} implies the same thing about algebraic matroids in characteristic zero.  
It would be interesting to know if similar results hold for algebraic matroids in positive 
characteristic.

    \bigskip
    
\noindent \emph{Acknowledgements}  
The first and third authors wish to thank Franz Király for many helpful conversations
during previous projects which have influenced their understanding of algebraic
matroids. We also wish to thank Bernd Sturmfels and David Cox for their encouragement, Will Traves for
helpful conversations, and Dustin Cartwright for comments on the Lindström  valuation.

\end{document}